\documentclass[]{article}  
\usepackage{CJK}
\usepackage{amssymb}
\usepackage{bbm}
\usepackage{amsfonts}
\usepackage{mathrsfs}
\usepackage{graphicx}
\usepackage{amsmath}
\usepackage{amsthm}
\usepackage{xypic}
\usepackage{graphicx}
\usepackage{times}
\usepackage{geometry}
\usepackage{color}
\usepackage{indentfirst}
\usepackage{cases}
\usepackage{hyperref}
\usepackage{graphicx}
\usepackage{float}
\usepackage{amsthm}
\usepackage{amssymb}
\usepackage{amsmath}
\usepackage{mathrsfs}
\usepackage{indentfirst}
\usepackage{geometry}
\usepackage{authblk}
\usepackage{hyperref}
\usepackage{enumerate}
\usepackage[numbers,sort&compress]{natbib}

\newtheorem{Theorem}{Theorem}[section]
\theoremstyle{definition}

\newtheorem{Lemma}[Theorem]{Lemma}

\theoremstyle{definition}

\title{The multiplicity of the laplacian eigenvalue 1 of a tree}
\author{Songnian Xu \thanks{Corresponding author. E-mail address: xsn131819@163.com},\ \ \ Dein Wong \thanks{Corresponding author. E-mail address:wongdein@163.com.  Supported by the National Natural Science Foundation of China(No.12371025)} ,\ \ Chi Zhang \thanks{ Supported by NSFC of China(No.12001526)}, \ \ Wenhao Zhen
\\ {\small  \it School of Mathematics, China University of Mining and Technology, Xuzhou,  China.}

}
\date{}

\begin{document}
\baselineskip 17pt

\title{The multiplicity of the laplacian eigenvalue 1 of a tree}

\author{Songnian Xu \thanks{Corresponding author. E-mail address: xsn131819@163.com},\ \ \ Dein Wong \thanks{Corresponding author. E-mail address:wongdein@163.com.  Supported by the National Natural Science Foundation of China(No.12371025)} ,\ \ Wenhao Zhen
\\ {\small  \it School of Mathematics, China University of Mining and Technology, Xuzhou,  China.}

}

\date{}
\maketitle

\begin{abstract}
\ \ Let $G$ be a connected, undirected simple graph. Denote by $L(G)$ the Laplacian matrix of $G$, and let $m_{G}(\lambda)$ be the multiplicity of an eigenvalue $\lambda$ of $L(G)$. When $G$ is a tree $T$ with $n \ge 6$ vertices, Tian et al. [Discrete Mathematics, 2026] proved that if $T$ is reduced and contains no pendant $P_3$, then
\[
m_{T}(1) \le \frac{n-6}{4},
\]
and they gave a complete characterization of the graphs for which equality holds.

In this paper, we further investigate the above problem. Still assuming that $T$ is a tree with $n \ge 7$ vertices which is reduced and has no pendant $P_3$, we prove the following results. If $m_T(1) \neq \frac{n-6}{4}$, then
\[
m_{T}(1) \le \frac{n-7}{4},
\]
and we give a complete characterization of the graphs for which equality holds. If, moreover, $m_T(1) \neq \frac{n-6}{4}, \frac{n-7}{4}$, then
\[
m_{T}(1) \le \frac{n-8}{4},
\]
and we also give a complete characterization of the extremal graphs.
\end{abstract}

\vskip 2.5mm
\noindent{\bf AMS classification:} 05C50
 \vskip 2.5mm
 \noindent{\bf Keywords:}   Laplacian eigenvalue; trees; eigenvalue multiplcity

\section{Introduction}
In this paper, we consider a simple undirected connected graph $G$. Let $V(G)$ and $E(G)$ denote the vertex set and edge set of $G$, respectively. For $V_1 \subseteq V(G)$ and $E_1 \subseteq E(G)$, $G - V_1$ denotes the induced subgraph of $G$ on $V(G) \setminus V_1$, and $G - E_1$ denotes the graph obtained from $G$ by deleting the edges in $E_1$. For $v \in V(G)$, let $d_v(G)$ denote the degree of $v$ in $G$. When no confusion arises, we simply write $d_v$ or $d(v)$. In a graph $G$, $d(u,v)$ denotes the length of a shortest path between $u$ and $v$. The diameter of $G$ is $d(G) = \max\{d(u,v) \mid u, v \in V\}$. Let $A(G) = (a_{ij})$ denote the adjacency matrix of $G$, where $a_{ij}=1$ if vertices $i$ and $j$ are adjacent, and $a_{ij}=0$ otherwise. The Laplacian matrix of $G$ is $L(G) = D(G) - A(G)$, where $D(G)$ is the diagonal matrix whose diagonal entries are the degrees of the vertices of $G$. The multiplicity of an eigenvalue $\lambda$ of $L(G)$ is denoted by $m_G(\lambda)$. 
Moreover, if \( G \) is obtained from a graph \( G_1 \) and a path \( P_k \) by joining a pendant vertex of \( P_k \) and an arbitrary vertex of \( G_1 \), then we say \( P_k \) is a pendant path of \( G \). For convenience, a pendant path on \( k \) vertices in a graph is denoted by \( \mathcal{P}^k \).

In 1985, Faria \cite{Fa} proved that for any graph $G$, $m_G(1) \ge p(G) - q(G)$, where $p(G)$ denotes the number of pendant vertices of $G$ and $q(G)$ denotes the number of quasi-pendant vertices of $G$. Andrada et al. \cite{EA} gave a partial characterization of graphs satisfying $m_G(1) = p(G) - q(G)$. Later, Grone et al. \cite{RG} proved that for any tree $T$, $m_T(1) \le p(T) - 1$. Gupta \cite{Gu} showed that $m_T(1) = p(T) - 1$ if and only if $d(u_1, u_2) \equiv 2 \pmod{3}$ for any two pendant vertices $u_1, u_2$ of $T$. Subsequently, in 2026, Wong et al. \cite{Dein} gave a complete characterization of graphs satisfying $m_T(\lambda) = p(T) - 1$. Guo et al. \cite{JM1} showed that if $T$ is a tree with $n$ vertices, then $m_T(1) \in S = \{0, 1, \dots, n-5, n-4, n-2\}$ and for every $k \in S$, there exists a tree $T$ with $n$ vertices such that $m_T(1) = k$. In 2025, Wang et al. \cite{Wang1} showed that if $T$ is a tree with $n$ vertices and $\beta'(T) \ge 2$, then $m_T(1) \le n - 2\beta'(T) - 1$, where $\beta'(T)$ is the induced matching number of $T$, and they gave a complete characterization of extremal graphs. Li et al. \cite{Li} proved that $m_G(1) \le 2c(G) + P(G)$, where $c(G)$ denotes the dimension of the cycle space of $G$, and equality holds if and only if $G = C_n$ with $6 \mid n$.

A graph $G$ is said to be reduced if $p(G) = q(G)$. In 2025, Tian et al. \cite{Tian2} proved that if a tree $T$ is reduced, then $m_T(1) \le \frac{n-2}{4}$, and they gave a complete characterization of extremal graphs, as stated in Theorem 1.2 below. Subsequently, in 2026, Tian \cite{Tian2} further investigated this problem and proved that if $T$ is reduced and contains no pendant $P_3$, then $m_T(1) \le \frac{n-6}{4}$, and equality holds if and only if $T \cong G_1$ (see Fig. 1). In this paper, we continue the study of Tian's results. We prove that if $T$ is reduced, contains no pendant $P_3$, and $T \not\cong G_1$, then $m_T(1) \le \frac{n-7}{4}$, and the extremal graphs are exactly $G_4$ or $G_5$ (see Fig. 2). Moreover, if we further require that $T$ is reduced, contains no pendant $P_3$, and $T \not\cong G_1, G_4, G_5$, then $m_T(1) \le \frac{n-8}{4}$, and the extremal graphs are exactly $G_2$ or $G_3$ (see Fig. 1). The main results of this paper are stated as Theorems 1.3 and 1.4 below.

\begin{figure}[H]
  \centering
  \includegraphics[width=0.8\linewidth]{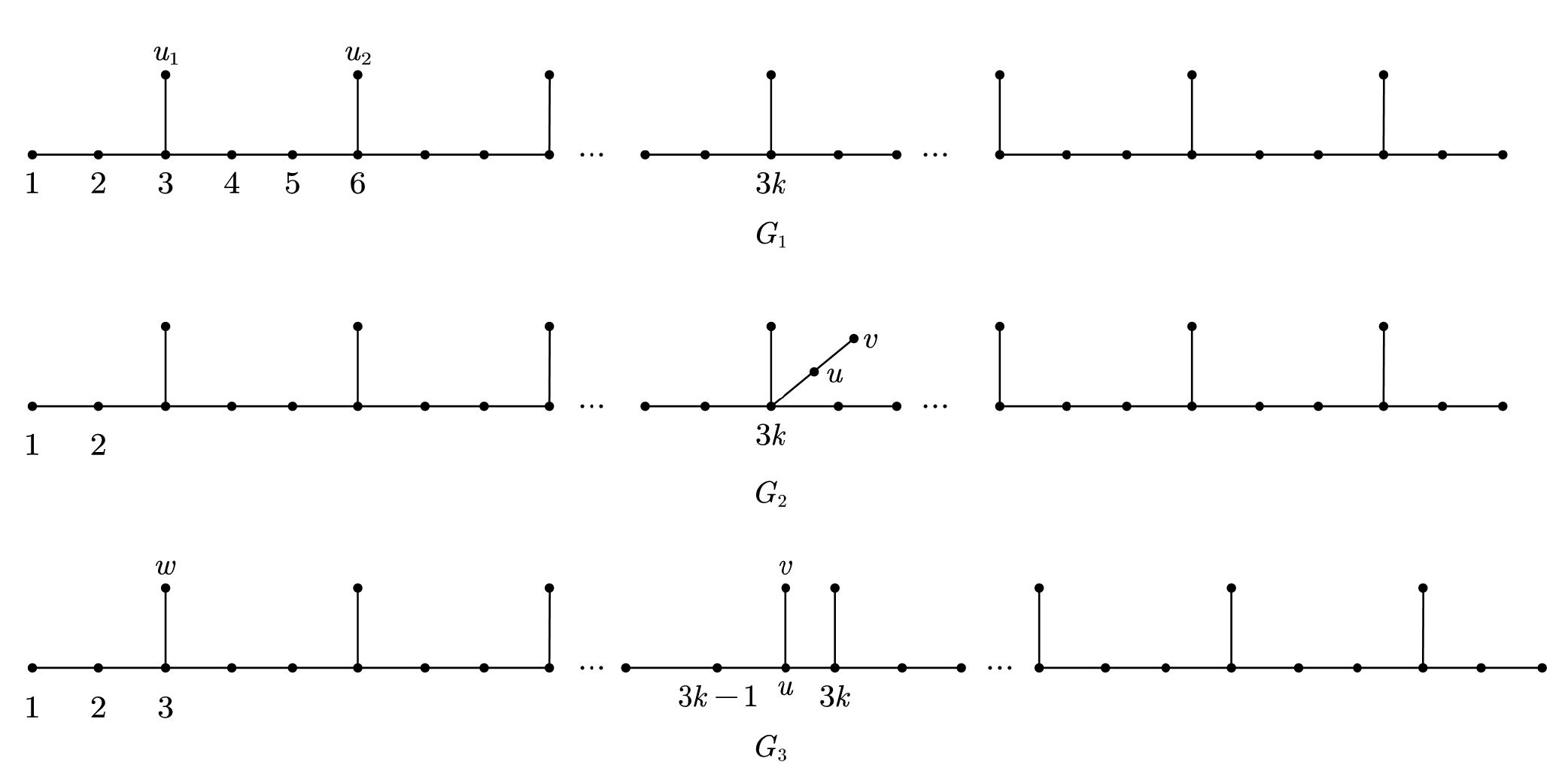}
   \caption{$G_1$,$G_2$ and $G_3$} 
\end{figure}
\begin{Theorem}\cite{Tian1}
Let $T$ be a reduced tree without $\mathcal{P}^3$ and let the order of $T$ be $n \ge 6$. Then we obtain that
\[
m_{T}(1) \leq \frac{n - 6}{4},
\]
and the equality holds if and only if $T\cong G_1$ and $n \equiv 2 \pmod{4}$.
\end{Theorem}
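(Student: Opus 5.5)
Since $T$ is reduced with $n\ge 6$, every quasi-pendant vertex carries exactly one pendant vertex. The absence of $\mathcal{P}^3$ then means every pendant vertex hangs from a vertex that is itself not of degree $2$ with a further pendant-path continuation; in other words, pendant structures are only pendant $P_2$'s attached at branching vertices. The plan is induction on $n$, using the standard reduction lemmas for the Laplacian eigenvalue $1$ on trees. The most useful fact is the following: if $x$ is a quasi-pendant vertex with pendant neighbour $y$, then an eigenvector $\xi$ for eigenvalue $1$ satisfies $\xi(x)=0$ at the pendant vertex equation. Consequently, deleting the pendant edge $xy$ together with interior edges behaves like a Parter/Wiener type splitting.

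First, I will establish an interlacing-type deletion inequality. For a vertex $v$ of $T$ we have $m_T(1)\le m_{T-v}(1)+1$. In the favourable situation where $v$ is the quasi-pendant vertex of a pendant $P_2$, the bound is sharper, $m_T(1)=m_{T-\{x,y\}}(1)$ with suitable degree corrections. The aim is to decompose $T$ at a vertex $v$ of a longest path. Let $P=v_0v_1\cdots v_d$ be a diametral path. Because $T$ is reduced and has no $\mathcal{P}^3$, $v_1$ has degree $2$ with pendant $v_0$, and $v_2$ has degree at least $3$, with every other branch at $v_2$ being a pendant $P_2$ or a single pendant vertex. Only one pendant vertex is allowed, since $T$ is reduced.

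Second, I will delete $v_2$, or a suitable set of $4$ vertices around it. The count is then checked: removing the star-like branch at $v_2$ costs at least $4$ vertices for each unit of multiplicity gained. The components created are $P_2$'s, which have $m(1)=0$ for $L(P_2)$ with eigenvalues $0,2$, and possibly $K_1$'s, together with the remaining tree $T'$. If $T'$ is again reduced without $\mathcal{P}^3$ and has order at least $6$, induction gives $m_{T'}(1)\le (n'-6)/4$, and hence $m_T(1)\le (n-6)/4$. When $T'$ fails these hypotheses (it gains a $\mathcal{P}^3$, becomes non-reduced, or is small), I will invoke instead the Tian et al. bound $m\le (n-2)/4$ for reduced trees (Theorem 1.2), or Faria/Grone bounds, applied to $T'$ after trimming. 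The deletion is then chosen larger so that at least $8$ vertices are removed per extra loss of $4$ in the constant.

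Third, for equality, I will trace the case analysis. Equality forces every inequality in the chain to be tight, so each step must be tight, including the base case with $n=6$ (or the smallest possible $n$). This pins down the branch at each step as a fixed gadget, and assembling the gadgets yields the structure $G_1$; the condition $n\equiv 2 \pmod 4$ follows from integrality. Conversely, I will verify $m_{G_1}(1)=(n-6)/4$ by exhibiting independent eigenvectors supported on the gadgets: values $\pm1$ on paired pendant $P_2$'s and $0$ elsewhere.

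The main obstacle is the second step: controlling the residual tree $T'$ when it is no longer reduced or acquires a pendant $P_3$. I would first try to pass to its reduced core via Faria-type pendant-removal identities, which change $m(1)$ by a computable amount, before applying induction.
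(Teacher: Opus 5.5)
This statement is quoted from Tian et al.\ and is not proved in the paper. The closest argument there is the proof of Theorem 1.3: an induction along a diametral path driven by the exact identities of Lemmas 2.2, 2.3, 2.6, 2.7 and 2.8, with the edge bound of Lemma 2.1 used only where it already gives a strict inequality. Your outline has this skeleton, but its load-bearing steps fail or are missing.

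\textbf{The vertex-deletion inequality is false.} The bound $m_T(1)\le m_{T-v}(1)+1$ does not hold for the Laplacian. $L(T-v)$ is not a principal submatrix of $L(T)$ (that submatrix has an extra $+1$ at each neighbour of $v$), so interlacing does not apply. For $K_{1,3}$ and its centre you get $2$ versus $0$. It fails in your class too: deleting the middle branch vertex of the $14$-vertex $G_1$ (where $m(1)=2$) leaves two $6$-vertex copies of $G_1$ and a $K_1$, all with $m(1)=0$. The valid tool is Lemma 2.1, which costs one unit per deleted edge. So you should cut only $v_2v_3$, which gives $m_T(1)\le 1+m_{T'}(1)$, since the branch at $v_2$ has $m(1)=0$ by Lemma 2.4.

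\textbf{The deferred step is the whole proof.} When $T'$ is not reduced, passing to a reduced core costs exactly one unit per surplus leaf (Lemma 2.3), and that ruins the count. Suppose $v_2$ has two pendant $P_2$'s and no pendant vertex, $d_{v_3}=2$, and $v_4$ carries a pendant vertex. Then $v_3$ is a second leaf at $v_4$ in $T'$, and your chain yields only $m_T(1)\le 2+m_{T'-v_3}(1)\le 2+\frac{n-12}{4}=\frac{n-4}{4}$. What works here is Lemma 2.7 at $v_4$: it splits off the $v_2$-side at no cost and gives $m_T(1)=m_{T'-v_3}(1)$. Likewise, when $v_2$ is quasi-pendant you should not cut at all. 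Instead delete $v_0v_1$ for free (Lemma 2.8), then use Lemma 2.2 if a $\mathcal{P}^3$ appears, and the gadget identity of Lemma 2.6 ($+1$ for exactly four deleted vertices). This case split, on whether $v_2$ is quasi-pendant and on the degrees along $v_3,v_4,\dots$, is exactly what ``choose a larger deletion'' leaves unsupplied.

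\textbf{The equality argument yields no structure.} Lemma 2.1 is only an inequality, so ``every inequality in the chain is tight'' tells you nothing about the shape of $T$. You must show that every branch using Lemma 2.1 (or the weaker Theorem 1.2) is strict. Then read off $G_1$ only from branches governed by exact identities. For example, Lemma 2.6 gives $m_T(1)=1+m_{T_1}(1)$ with $|T_1|=n-4$; equality forces $T_1\cong G_1$, and hence $T$ is $G_1$ with one more block. Compare Cases 2.1--2.2.2.1 of the proof of Theorem 1.3.

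\textbf{The sufficiency check is wrong.} Let $ab$ be a pendant $P_2$ at $w$ with $b$ a leaf. The equations of $(L-I)x=0$ at $b$ and $a$ force $x_a=0$ and $x_b=-x_w$. So no eigenvector for $1$ is $\pm1$ on pendant $P_2$'s and $0$ elsewhere. The eigenvectors of $G_1$ instead come from two quasi-pendant vertices $q,q'$ joined by a path $qyzq'$ with $d_y=d_z=2$. Put $-1,1,1,-1$ on the pendant neighbour of $q$, on $y$, on $z$, and on the pendant neighbour of $q'$, and $0$ elsewhere. $G_1$ contains $(n-6)/4$ such pairs with disjoint interiors, which gives $m_{G_1}(1)\ge (n-6)/4$. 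Alternatively, iterate Lemma 2.6 as the paper does for $G_3$.
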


As shown in Fig. 1, $G_2$ and $G_3$ are obtained from $G_1$ by adding a  path $P_2$. More precisely, $G_2$ is obtained from $G_1$ by attaching a pendant $P_2$ at $v_{3k}$, while $G_3$ is obtained from $G_1$ by inserting a vertex $u$ between $v_{3k-1}$ and $v_{3k}$ (or between $v_{3k}$ and $v_{3k+1}$; by symmetry the two cases are equivalent) and then attaching a pendant vertex $v$ to $u$.

Given a graph $G$, pick a pendant vertex $u$ adjacent to $v$, and let the neighbors of $v$ other than $u$ be $v_1, v_2, \dots, v_s$. Remove $u$ and $v$, then attach a copy of $P_2$ to each $v_i$ by joining one endpoint of the $P_2$ to $v_i$. The resulting graph is denoted $G'$ and referred to as the \textit{reduction graph} of $G$ .
If $v$ has degree $2$ in $G$, then $G'$ coincides with $G$. This operation may be applied repeatedly until no quasi-pendant vertex of degree at least $3$ remains. The final outcome is called the \textit{final reduction graph} of $G$.
\begin{Theorem}\cite{Tian2}
Let $T$ be a reduced tree on $n \ge 6$ vertices. Let $T'_i$ ($i = 1, 2, \dots, k$) be the components of the final reduction graph of $T$. Then
\[
m_{T}(1) \le \frac{n - 2}{4},
\]
and the equality holds if and only if the following two assertions hold:
\begin{enumerate}
\item[(i)] Each vertex of degree greater than $2$ is a quasi-pendant vertex in $T$;
\item[(ii)] Each $T'_i$ of $T$ is a path $P_6$.
\end{enumerate}
\end{Theorem}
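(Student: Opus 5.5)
The plan is to argue by induction on $n$, working with eigenvectors of $L(T)$ for the eigenvalue $1$ and with the reduction operation defined before the statement. Two standard facts drive everything.

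\emph{Step 1: local constraints on $1$-eigenvectors.} If $x$ is an eigenvector for eigenvalue $1$, then at every vertex $w$
\[
(d_w-1)x_w=\sum_{z\sim w}x_z .
\]
At a pendant vertex $u$ with neighbour $v$ this forces $x_v=0$. So every quasi-pendant vertex carries the value $0$ in every $1$-eigenvector.

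\emph{Step 2: the reduction preserves $m(1)$.} Take a pendant vertex $u$ whose neighbour $v$ has degree $s+1\ge 3$, with other neighbours $v_1,\dots,v_s$. I will show that $m_T(1)=m_{T'}(1)$, where $T'$ is the reduction graph. The map is $x\mapsto x'$ with:
\begin{itemize}
\item $x'$ equal to $x$ off the deleted and new vertices;
\item on each new pendant $P_2$ attached to $v_i$, the quasi-pendant vertex set to $0$ and the leaf set to $-x_u$, which keeps the equation at $v_i$ balanced.
\end{itemize}
Indeed, at $v$ we have $x_v=0$, so the equation at $v$ reads $x_u+\sum_i x_{v_i}=0$, while the equation at $u$ gives $x_u=x_v+\dots$ type relations. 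The new pendant copies reproduce exactly the contribution of $v$ at each $v_i$. Checking that this map is linear and bijective gives the equality of multiplicities. Iterating, the order grows by $2(s-1)$ at each step, and we end with the final reduction graph: a forest of components $T_i'$ in which every quasi-pendant vertex has degree $2$.

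\emph{Step 3: reduce to the components.} Since $m_T(1)=\sum_i m_{T_i'}(1)$, it suffices to prove the bound for each component $T_i'$. Each $T_i'$ is reduced and has all quasi-pendant vertices of degree $2$.

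\emph{Step 4: the key inequality for each component.} I will show $m_{T_i'}(1)\le (|T_i'|-2)/4$, with equality only for $P_6$. The approach is to delete a pendant $P_2$, i.e.\ a leaf together with its degree-$2$ neighbour. Interlacing-type counting gives
\[
m_{T_i'}(1)\le m_{T_i'-P_2}(1)+\tfrac12 ,
\]
and more precisely one gains at most $1$ per $4$ vertices. For paths, $m_{P_k}(1)=1$ exactly when $3\mid k$, and $P_6$ is the extremal case.

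\emph{Step 5: bookkeeping back to $T$.} This requires tracking how $n$ relates to $\sum_i|T_i'|$ and the number $k$ of components: each reduction increases the number of components and the vertex count in a controlled way. Combining this with Step 4 should yield $m_T(1)\le (n-2)/4$.

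\emph{Step 6: characterize equality.} Equality forces equality in every step. This means every component is $P_6$, and every vertex of degree at least $3$ is quasi-pendant, since otherwise the accounting in Step 5 is strict.

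\emph{Main obstacle.} I expect the hardest part to be the vertex-count bookkeeping in Steps 4 and 5. A branching vertex that is not quasi-pendant must be shown to cost strictly, and showing that non-path components lose enough needs careful case analysis on the leaf structure.
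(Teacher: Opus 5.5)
The paper does not prove this statement; it imports it from Tian et al. Your proposal therefore has to stand on its own, and it has a genuine gap exactly where the content of the theorem lies: Step 4. The inequality $m_{C}(1)\le m_{C-P_2}(1)+\tfrac12$ is false already for $C=P_6$. Deleting a pendant $P_2$ leaves $P_4$, and $m_{P_6}(1)=1>0+\tfrac12$. What edge interlacing (Lemma 2.1) actually gives, after cutting the edge that attaches a pendant $P_2$ and using $m_{P_2}(1)=0$, is $m_C(1)\le 1+m_{C-P_2}(1)$. That is one unit per two deleted vertices, which can never produce the rate $1/4$. The phrase ``one gains at most $1$ per $4$ vertices'' is just the desired conclusion restated. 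Moreover, $C-P_2$ need not be reduced or a final component (e.g.\ $P_5-P_2=P_3$), so no induction closes this way.

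You also misplace the difficulty. Step 5 is not a bookkeeping problem. One reduction step replaces a component $C$ by $s$ components of total order $|C|-2+2s$, so $\sum_i(|T'_i|-2)=n-2$ is an exact invariant and Step 5 is a one-line identity with no slack. Hence (i) cannot come from ``strictness in Step 5''. It is in fact implied by (ii): the reduction only deletes a quasi-pendant vertex and its leaf, never gives an old vertex a new pendant neighbour, and preserves degrees of surviving vertices. So a non-quasi-pendant vertex of degree at least $3$ survives into a component that is not a path.

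There is also a small slip in Step 2. The leaf of the new $P_2$ at $v_i$ must receive $-x_{v_i}$, which is forced by the equation at the new degree-$2$ vertex; the equation at $v_i$ balances automatically since $x_v=0$. With that correction, Step 2 is Lemma 2.7 iterated.

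So everything hinges on one claim. For a component $C$ that is reduced, has all quasi-pendant vertices of degree $2$, and is not a path, you need the \emph{strict} inequality $m_C(1)<(|C|-2)/4$. Paths are handled by $m_{P_k}(1)=1$ iff $3\mid k$.

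A short argument that works is strong induction over all reduced trees. The invariant lets you pass from $T$ to its strictly smaller components whenever $T$ has a quasi-pendant vertex of degree at least $3$. In $C$, take a diametral path $v_1v_2\cdots v_{d+1}$; there are two cases.

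If $d_{v_3}=2$, remove the pendant $P_3$ $v_1v_2v_3$ by Lemma 2.2. The rest is reduced unless $C$ is a path, so $m_C(1)\le(|C|-5)/4$.

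If $d_{v_3}\ge3$, maximality of the path and the degree-$2$ condition force every neighbour of $v_3$ other than $v_4$ to be a degree-$2$ quasi-pendant vertex. So $v_3$ carries $t\ge2$ pendant $P_2$'s forming a star $S$ with $m_S(1)=0$. Cutting $v_3v_4$ by Lemma 2.1 gives $m_C(1)\le 1+(|C|-2t-3)/4\le(|C|-3)/4$. The exceptions are when the $v_4$-side is not reduced ($v_4$ quasi-pendant, or $d_{v_4}=2$ with $v_5$ quasi-pendant). Then $C$ is a $P_2$-star, or a $P_2$-star with one extra pendant $P_3$, and $m_C(1)=0$ directly.

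Some argument of this kind is what your Step 4 is missing.
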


\begin{Theorem}
Let $T$ be a reduced tree without $\mathcal{P}^3$ and $|T| = n \ge 7$. If $T \not\cong G_1, G_4$ or $G_5$, then
\[
m_{T}(1) \le \frac{n-8}{4},
\]
and equality holds if and only if $T \cong G_2$ or $G_3$ with $n \equiv 0 \pmod{4}$.

\end{Theorem}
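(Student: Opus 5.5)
The natural plan is induction on $n$, following the strategy that presumably underlies Theorem 1.1 (Tian et al.). The key tool is the standard multiplicity reduction for the Laplacian eigenvalue $1$. If $v$ is a quasi-pendant vertex with a pendant neighbour $u$, then the reduction operation defined before Theorem 1.2 preserves $m(1)$ up to a computable shift. More concretely, I would use the Parter--Wiener type deletion for trees: $m_T(1)=m_{T-v}(1)-1$ whenever $v$ is a Parter vertex. I would also use the interlacing bound $|m_T(1)-m_{T-v}(1)|\le 1$. Reducedness means every quasi-pendant vertex has exactly one pendant neighbour. Since there is no $\mathcal{P}^3$, every pendant path has at most $2$ vertices. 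So the leaves come as pendant $P_2$'s $u$--$v$, with $v$ of degree $2$, or as single leaves hanging at vertices of degree $\ge 3$.

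\textbf{Step 1: choose the peeling operation.} Take a longest path in $T$ and look at its end. The structure near a diametral end is forced to be one of a few local configurations:
\begin{enumerate}
\item[(a)] a pendant $P_2$ attached to a vertex $w$;
\item[(b)] a vertex $w$ of degree $\ge 3$ carrying one leaf and some pendant $P_2$'s.
\end{enumerate}
In each configuration I would delete a small set $S$ of vertices, with $|S|=4$ in the typical case. The deletion should be chosen so that $m_T(1)\le m_{T-S}(1)+1$ and $T-S$ (or each of its components) is again reduced without $\mathcal{P}^3$. For example, deleting a pendant $P_2$ together with the quasi-pendant pair it forces typically costs $4$ vertices and at most one unit of multiplicity. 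If $T-S$ is disconnected, I would sum over components, using that $m(1)$ is additive over components and that each component either satisfies the $(n-6)/4$ bound of Theorem 1.1 or is a small exceptional tree handled directly.

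\textbf{Step 2: the induction.} Suppose $T-S$ has order $n-4$ and is not $G_1,G_4,G_5$. Then by induction $m_T(1)\le 1+\frac{n-12}{4}=\frac{n-8}{4}$. The cases to treat are those where $T-S\cong G_1,G_4$ or $G_5$. If $T-S\cong G_1$, Theorem 1.1 gives $m_T(1)\le 1+\frac{n-10}{4}$, which is too weak by $\tfrac12$. So here I must directly list the trees obtained by re-attaching the deleted configuration to $G_1$ at each possible vertex $v_i$. For each such tree I would compute $m_T(1)$ by the Parter-vertex deletion at the attachment point, since the paths in $G_1$ have known eigenvector patterns: the eigenvectors for $1$ on a path vanish periodically modulo $3$. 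This computation should show that $m_T(1)$ drops unless the attachment is at the special positions $v_{3k}$. Those special positions yield exactly $G_2$ and $G_3$, together with the $G_4,G_5$ families excluded by hypothesis. The analogous check is needed when $T-S\cong G_4,G_5$, using the $(n-7)/4$ bound for them. Base cases $n\le 11$ would be checked by hand.

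\textbf{Step 3: equality.} Equality forces equality at each inductive step: $m_{T-S}(1)$ is extremal, and the deletion loses exactly one. Tracing back shows $T$ is built from $G_1$ by one admissible attachment, giving $G_2$ or $G_3$. The condition $n\equiv 0\pmod 4$ then comes from integrality of $\frac{n-8}{4}$. Conversely, I would verify $m_{G_2}(1)=m_{G_3}(1)=\frac{n-8}{4}$ by exhibiting the eigenvectors, or by applying the deletion at $v_{3k}$.

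\textbf{Main obstacle.} I expect the main obstacle to be Step 2 in the case where $T-S$ is exceptional ($G_1$, $G_4$, $G_5$). There the generic bound loses $\tfrac12$ or $\tfrac14$, and one must do an exact, position-dependent computation of $m(1)$ for all ways of gluing a small piece onto these families. I would first try to handle it uniformly, by showing that the attachment vertex is a Parter vertex exactly when its position is $\equiv 0 \pmod 3$ along the spine.
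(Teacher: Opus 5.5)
Your skeleton matches the paper's: induction on $n$ along a diametral path $v_1v_2v_3\cdots$, split by whether $v_3$ is quasi-pendant. There is a genuine gap, though. Step 1 carries the whole proof, but it is only asserted, and the tools you name for it do not work.

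\textbf{The vertex-deletion tools.} Parter--Wiener theory and Cauchy interlacing concern the principal submatrix $L(T)(v)$. That matrix is $L(T-v)$ with $1$ added to the diagonal entry of each neighbour of $v$. It is not the Laplacian of a tree, so your induction hypothesis cannot be applied to it. For the genuine $L(T-v)$, the bound $|m_T(1)-m_{T-v}(1)|\le 1$ is false: $m_{K_{1,3}}(1)=2$ while $m_{3K_1}(1)=0$. The reason is that $L(T)$ and $L(T-v)\oplus(0)$ differ by a matrix of rank $d_v$, not rank one. The rank-one move is edge deletion (Lemma 2.1). The exact moves available are Lemmas 2.2, 2.3 and 2.5--2.8.

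\textbf{Closure of the class.} More seriously, the class of reduced trees without $\mathcal{P}^3$ is not closed under these local moves. Handling the trees that fall out of the class is most of the paper's work.
\begin{itemize}
\item When $v_3$ carries at least two pendant $P_2$'s and no leaf, cutting $v_3v_4$ leaves a $T_4$ in which $v_4$ can be a second leaf at a quasi-pendant vertex, or the end of a new pendant $P_3$.
\item Splitting instead with Lemma 2.7 at a quasi-pendant neighbour removes the whole branch at $v_3$ at no cost. The remainder can still contain a pendant $P_3$.
\item Removing that $P_3$ (Lemma 2.2) can leave a non-reduced tree, where Lemma 2.3 charges one unit for a single vertex. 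This can repeat: it is the recursion through $H^0_{12}$ and $H^1_{12}$ that produces the family $G_6$.
\end{itemize}
So a set $S$ with $|S|\ge 4$, cost at most $1$, and $T-S$ in the class is unjustified, and the natural choices fail in exactly these configurations.

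\textbf{What is missing.} You need a bound for trees outside the class. The paper uses Theorem 1.2, $m_T(1)\le\frac{n-2}{4}$ for every reduced tree, strict off its extremal family. This gives $m_T(1)<\frac{n-8}{4}$ once at least six vertices have been removed at zero cost. Lemma 2.3 then handles non-reduced remainders at a cost of exactly one per extra leaf.

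\textbf{Equality (Step 3).} This step inherits the same gap. The paper does not get the extremal trees by tracing the induction. When $v_3$ is quasi-pendant, it deletes one pendant $P_2$ at zero cost (Lemma 2.8) and invokes the equality case of Theorem 1.1; this immediately yields $G_2$ or $G_3$.

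\textbf{Where the difficulty actually lies.} Since $m_T(1)$ is an integer, Theorem 1.1 applied to $T$ itself already gives $m_T(1)\le\frac{n-8}{4}$ whenever $n\not\equiv 3\pmod 4$; for $n\equiv 2$ this uses $T\not\cong G_1$. In particular your case $T-S\cong G_1$ with $|S|=4$ forces $n\equiv 2\pmod 4$ and is automatic. The real content is:
\begin{itemize}
\item ruling out $m_T(1)=\frac{n-7}{4}$ for $n\equiv 3\pmod 4$ and $T\not\cong G_4,G_5$;
\item characterizing equality for $n\equiv 0\pmod 4$.
\end{itemize}
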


\begin{figure}[H]
  \centering
  \includegraphics[width=0.7\linewidth]{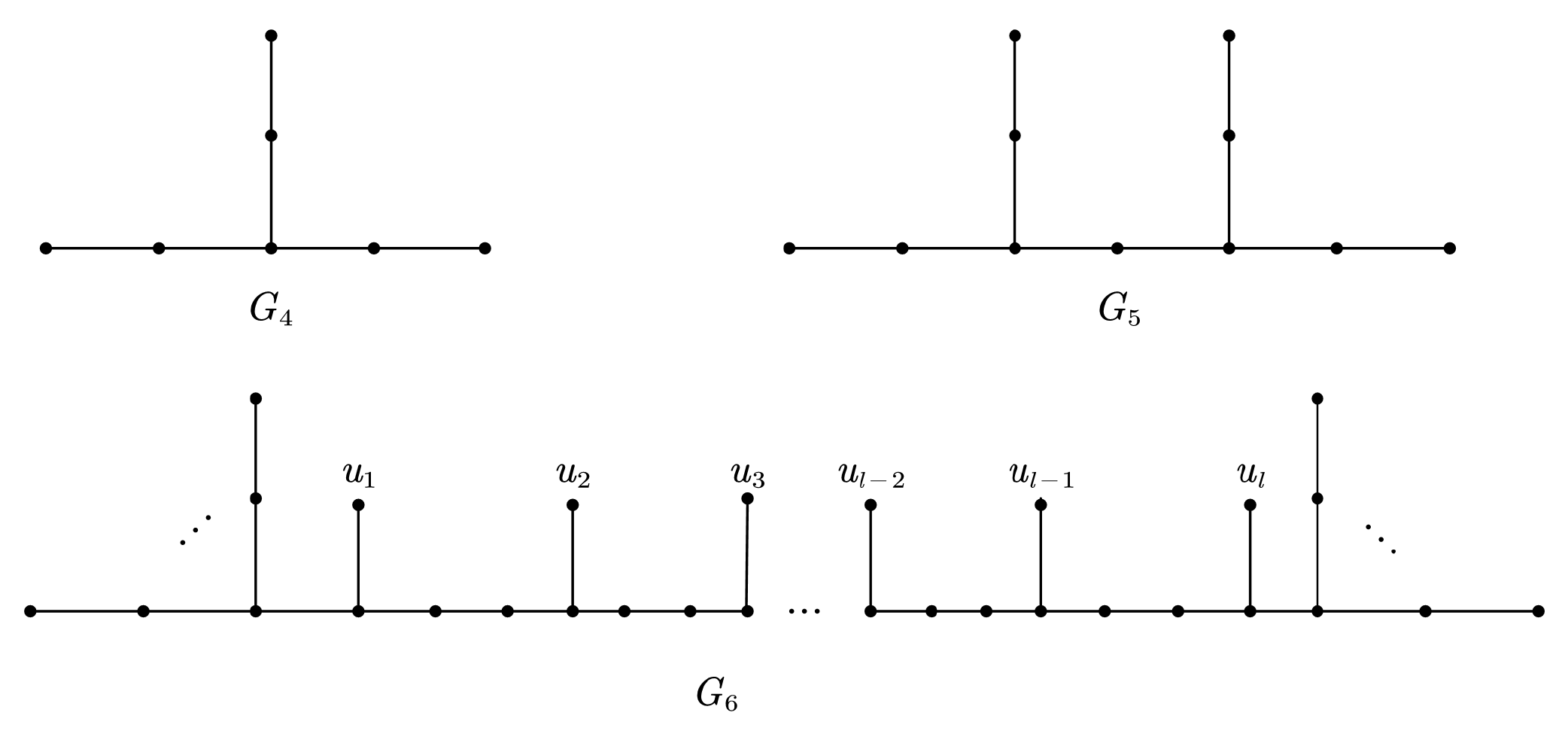}
   \caption{$m_{G_4}(1)=0$, $m_{G_5}(1)=1$ and $m_{G_6}(1)=l-1$} 
\end{figure}

\begin{Theorem}
Let $T$ be a reduced tree without $\mathcal{P}^3$ and $|T| = n \ge 7$. If $T \not\cong G_1$, then
\[
m_{T}(1) \le \frac{n-7}{4},
\]
and equality holds if and only if $T \cong G_4$ or $G_5$.
\end{Theorem}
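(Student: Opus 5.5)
Since $4m_T(1)$ is an integer, Theorem 1.1 gives $4m_T(1)\le n-6$, with equality only for $T\cong G_1$. For $T\not\cong G_1$ we therefore already have $4m_T(1)\le n-7$. So the content of the statement is the characterization of equality, namely that $4m_T(1)=n-7$ forces $T\cong G_4$ or $G_5$. The converse direction is a direct check: compute $m_{G_4}(1)=0$ (so $n=7$) and $m_{G_5}(1)=1$ (so $n=11$) from the Laplacian, or by the reduction lemmas below.

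For the extremal characterization I would argue by induction on $n$, using the standard local tools for $m_T(1)$. These are: the reduction operation defined before Theorem 1.2, which preserves $m(1)$ up to a controlled change; and the classical fact that deleting a pendant $P_2$ together with its attaching behaviour changes $m(1)$ in a predictable way, via Faria-type star complements or the eigenvector zero pattern, where an eigenvector for eigenvalue $1$ vanishes at a quasi-pendant vertex whose pendant neighbour is forced. Take a diametral path $v_0v_1\cdots v_d$. Since $T$ is reduced and has no $\mathcal P^3$, every quasi-pendant vertex carries exactly one pendant vertex and $v_2$ has degree $\ge 3$ or is a branching point. I would analyse the branch at $v_2$ (and at $v_3$): it consists of pendant $P_2$'s and possibly stars $K_{1,1}$ attached to $v_2$. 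Deleting a suitable pendant piece (a pendant $P_2$ at a degree-$\ge3$ vertex, or a $P_2$ together with $v_2$) yields a smaller tree $T^*$, with $|T^*|=n-4$ or $n-3$ or similar. Relations of the form $m_T(1)\le m_{T^*}(1)+1$ for a 4-vertex deletion, and $m_T(1)\le m_{T^*}(1)$ for smaller deletions, then give $4m_T(1)\le 4m_{T^*}(1)+4$. For $T^*$ still reduced and without $\mathcal P^3$, equality in $T$ forces $4m_{T^*}(1)\ge |T^*|-7$. Hence $T^*$ is $G_1$, $G_4$ or $G_5$ by Theorem 1.1 and induction, or $T^*$ is small.

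The remaining work is to examine how one pendant piece can be re-attached to $G_1$, $G_4$, $G_5$ while keeping $T$ reduced, free of $\mathcal P^3$, and with $m(1)$ increasing by exactly $1$ (or staying equal, in the smaller-deletion case). Using the explicit structure of $G_1$ (a path with pendant vertices at the $v_{3i}$'s) and computing $m(1)$ via eigenvector propagation along the path, one shows that every such extension of $G_1$ drops below $(n-7)/4$, except those producing $G_5$. The same holds for $G_4$, whose extensions only produce $G_5$, and $G_5$ has no valid extension. If $T^*$ fails to be reduced or acquires a $\mathcal P^3$, I would instead choose the deleted piece differently, or apply the reduction operation first, and finish by handling the base cases $n\le 11$ directly.

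I expect the main obstacle to be the case analysis at the end of the diametral path. The deletion must keep $T^*$ in the class (reduced, no $\mathcal P^3$) while yielding a sharp inequality for $m(1)$. Also, checking all attachments to $G_1$ requires tracking $m(1)$ exactly, not just bounding it. I would try first to always delete $v_0,v_1$ together with the adjacent forced structure at $v_2$, using the zero-entry argument at quasi-pendant vertices.
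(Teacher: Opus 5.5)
Your integrality step is correct and shorter than the paper's route to the inequality. Since $m_T(1)$ is an integer and Theorem 1.1 gives $4m_T(1)<n-6$ for $T\not\cong G_1$, the bound $4m_T(1)\le n-7$ follows at once; the paper instead gets it from Theorem 1.3. Your ``if'' direction (computing $m_{G_4}(1)=0$, $m_{G_5}(1)=1$) matches the paper.

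The gap is the ``only if'' direction. Showing that $4m_T(1)=n-7$ forces $T\cong G_4$ or $G_5$ is equivalent to $m_T(1)\le\frac{n-8}{4}$ for every $T\not\cong G_1,G_4,G_5$ in the class, i.e.\ to Theorem 1.3, and your sketch does not establish it. Its key input is that deleting a piece on fewer than four vertices cannot decrease $m(1)$. This holds for a pendant $P_2$ at a quasi-pendant vertex (Lemma 2.8) or a pendant $P_3$ (Lemma 2.2), but fails in exactly your setting. Take the path $c_1wbc'$ and attach three pendant $P_2$'s at $c_1$, one at $b$ and two at $c'$, giving $n=16$. This $T$ is reduced and has no $\mathcal{P}^3$, and $c_1$ is the vertex $v_2$ of a diametral path, of degree $4$ and not quasi-pendant. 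Solving $(L-I)x=0$ from the leaves gives $x_w=3x_{c_1}$, $x_b=2x_{c'}$ and $x_w=3x_{c'}$. The equation $x_w=x_{c_1}+x_b$ at $w$ then holds identically, so $m_T(1)=1$. Deleting one pendant $P_2$ at $c_1$ gives a tree $T^*$ in the same class with $x_w=2x_{c_1}$. The equation at $w$ becomes $x_w=\tfrac76x_w$, so $m_{T^*}(1)=0$.

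When $v_2$ is not quasi-pendant, the only general relation for such a deletion is the interlacing bound $m_T(1)\le m_{T^*}(1)+1$ (Lemma 2.1). That is too weak for a $2$-vertex deletion. The paper (proof of Theorem 1.3, Case 1) instead cuts the edge from the $P_2$-star at the branching vertex to the rest of the tree. This discards at least $5$ vertices with $m(1)=0$ (Lemma 2.4) in exchange for the $+1$. It then treats in detail the cases where the remaining component is not reduced or acquires a $\mathcal{P}^3$: Cases 1.1--1.3, trees $H_1$--$H_{25}$, using Lemmas 2.2, 2.5, 2.6, 2.7 and Theorem 1.2. Case 2 likewise needs Lemma 2.6 and further subcases when $T-\{v_1,v_2\}$ acquires a $\mathcal{P}^3$. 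Your proposal leaves exactly these parts (``choose the deleted piece differently'', ``examine how one pendant piece can be re-attached to $G_1,G_4,G_5$'') unexecuted, so the equality characterization is not proved.

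In the paper this step is one line. Theorem 1.3, proved before Theorem 1.4, gives $m_T(1)\le\frac{n-8}{4}<\frac{n-7}{4}$ for all $T\not\cong G_1,G_4,G_5$. Together with your integrality argument and the two computed values, that completes the proof. Without citing Theorem 1.3, you must reproduce its case analysis.
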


\section{Proof of Theorems}

\begin{Lemma}\cite{MB}
Let $G - e$ be the graph obtained from a graph $G$ by deleting an edge $e$. Then
\[
-1 + m_{G-e}(1) \leq m_{G}(1) \leq 1 + m_{G-e}(1).
\]
\end{Lemma}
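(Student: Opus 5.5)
The statement immediately above is the edge-deletion lemma attributed to \cite{MB}: for $e\in E(G)$,
\[
-1+m_{G-e}(1)\le m_G(1)\le 1+m_{G-e}(1).
\]
I would prove it by interlacing for a rank-one perturbation. Write $e=uv$ and let $b_e\in\mathbb{R}^{V(G)}$ be the vector with $b_e(u)=1$, $b_e(v)=-1$ and all other entries $0$. Then
\[
L(G)=L(G-e)+b_eb_e^{T},
\]
so $L(G)$ and $L(G-e)$ differ by a symmetric matrix of rank one.

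The key step is the general fact that if $A$ and $B$ are real symmetric matrices with $\operatorname{rank}(A-B)\le 1$, then $|m_A(\lambda)-m_B(\lambda)|\le 1$ for every real $\lambda$. I would prove this directly with eigenspaces. Let $E_A=\ker(A-\lambda I)$ and $E_B=\ker(B-\lambda I)$. Consider the subspace $W=E_B\cap b_e^{\perp}$, which has dimension at least $\dim E_B-1$. For $x\in W$,
\[
(A-\lambda I)x=(B-\lambda I)x+b_e(b_e^{T}x)=0,
\]
so $W\subseteq E_A$. Hence $m_A(\lambda)\ge m_B(\lambda)-1$.

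The symmetric inequality follows by exchanging the roles of $A$ and $B$, since $B=A-b_eb_e^{T}$ is also a rank-one perturbation of $A$. Taking $A=L(G)$, $B=L(G-e)$ and $\lambda=1$ then gives both inequalities of the lemma at once.

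This argument is essentially routine, and there is no real obstacle. The only point needing care is that the multiplicity of an eigenvalue of a symmetric matrix equals the dimension of its eigenspace. That holds because symmetric matrices are diagonalizable, so algebraic and geometric multiplicities coincide. One could alternatively cite Cauchy-type interlacing for rank-one positive semidefinite updates, which gives $\mu_i(B)\le\mu_i(A)\le\mu_{i+1}(B)$ and hence the same multiplicity bound. The eigenspace-intersection argument is self-contained, so I would use it.
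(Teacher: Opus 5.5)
Your proof is correct: from $L(G)=L(G-e)+b_eb_e^{T}$ and the inclusion $\ker(L(G-e)-I)\cap b_e^{\perp}\subseteq\ker(L(G)-I)$ (and its symmetric counterpart), you get $|m_G(1)-m_{G-e}(1)|\le 1$, since algebraic and geometric multiplicities coincide for symmetric matrices. The paper gives no proof of its own and only cites \cite{MB}. There the bound follows from the standard interlacing of Laplacian eigenvalues under edge deletion, which is the same rank-one positive semidefinite update you use, so your eigenspace-intersection argument is a self-contained version of that route.
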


\begin{Lemma}\cite{RG}
Let $T$ be a tree obtained from a tree $T_1$ and a path $P_3$ by joining a vertex of $T_1$ to a pendant vertex of $P_3$. Then it follows that
\[
m_{T}(1) = m_{T_1}(1).
\]
\end{Lemma}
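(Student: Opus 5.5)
The statement immediately above is the second of the two lemmas just quoted, the one from \cite{RG}: if $T$ is obtained from a tree $T_1$ and a path $P_3=xyz$ by joining a vertex $w\in V(T_1)$ to the pendant vertex $x$, then $m_T(1)=m_{T_1}(1)$. I would prove it directly from the eigenvalue equations of $L(T)$ at $\lambda=1$. The goal is an explicit linear isomorphism between the eigenspace $\ker(L(T)-I)$ and the eigenspace $\ker(L(T_1)-I)$.

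\textbf{Step 1: the $P_3$ coordinates are forced.} Take $f\in\ker(L(T)-I)$. At the pendant vertex $z$, the equation $d_z f(z)-\sum_{u\sim z}f(u)=f(z)$ reads $f(z)-f(y)=f(z)$, so $f(y)=0$. At $y$ we get $2f(y)-f(x)-f(z)=f(y)$, which gives $f(x)=-f(z)$. At $x$ we get $2f(x)-f(y)-f(w)=f(x)$, so $f(x)=f(w)$. Hence $f(x)=f(w)$, $f(y)=0$ and $f(z)=-f(w)$: every coordinate on $P_3$ is determined by $f(w)$.

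\textbf{Step 2: the equation at $w$ reduces to the $T_1$ equation.} In $T$ we have $d_T(w)=d_{T_1}(w)+1$. The equation at $w$ in $T$ is
\[
(d_{T_1}(w)+1)f(w)-\sum_{u\sim w,\,u\in T_1}f(u)-f(x)=f(w).
\]
Since $f(x)=f(w)$, the two extra terms cancel, and what remains is exactly the $T_1$-equation at $w$. The equations at every other vertex of $T_1$ are identical in $T$ and in $T_1$. Therefore the restriction $f|_{T_1}$ lies in $\ker(L(T_1)-I)$.

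\textbf{Step 3: the restriction map is an isomorphism.} Conversely, take any $g\in\ker(L(T_1)-I)$ and extend it by $g(x)=g(w)$, $g(y)=0$, $g(z)=-g(w)$. Reading Steps 1 and 2 backwards shows that all equations of $L(T)$ hold, so the extension is an eigenvector of $L(T)$ for $1$. Restriction and this extension are mutually inverse linear maps; injectivity of restriction holds because the $P_3$ coordinates are forced by $f(w)$. The two eigenspaces therefore have equal dimension, which gives $m_T(1)=m_{T_1}(1)$.

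I see no real obstacle here. The only point requiring care is the bookkeeping at $w$, namely checking that the degree increase by one is exactly compensated by the new neighbour value $f(x)=f(w)$. I will also note that the argument never uses acyclicity, so the lemma holds for any graph $T_1$.
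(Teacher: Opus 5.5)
Your proof is correct and complete. The equations at $z$, $y$, $x$ force $f(y)=0$, $f(x)=f(w)$ and $f(z)=-f(w)$. The extra degree at $w$ is exactly cancelled by the new neighbour value $f(x)=f(w)$. Restriction and extension are then mutually inverse, so $\dim\ker(L(T)-I)=\dim\ker(L(T_1)-I)$. You use implicitly that Laplacians are real symmetric, so this eigenspace dimension equals the multiplicity $m(1)$; that is worth one sentence.

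The paper gives no proof of this lemma; it simply quotes it from Grone et al. So your argument is not a variant of the paper's route but a self-contained replacement for the citation. It buys three things:
\begin{enumerate}
\item[(i)] independence from the literature;
\item[(ii)] an explicit description of the $1$-eigenvectors of $T$, which vanish at $y$ and take the values $f(w),0,-f(w)$ along the attached path;
\item[(iii)] the generalisation you correctly note: acyclicity is never used, so the identity holds for an arbitrary graph $T_1$.
\end{enumerate}
The same local bookkeeping also proves Lemma 2.8 directly, without going through Lemma 2.7. There, a pendant neighbour of $v$ forces $f(v)=0$, and the two equations on the attached $P_2$ then force both new coordinates to vanish. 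The citation buys the paper only brevity.
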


\begin{Lemma}\cite{Wang1}
Let $G'$ be a graph obtained by adding a pendant vertex $w$ to a quasipendant vertex $u$ of the graph $G$ of order $n$, then
\[
m_{G'}(1) = m_G(1) + 1.
\]
\end{Lemma}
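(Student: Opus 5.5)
The statement immediately above is Lemma 2.3 (from \cite{Wang1}): adding a pendant vertex $w$ to a quasi-pendant vertex $u$ increases $m(1)$ by exactly one. I would prove it with a direct eigenspace computation, working with the matrix $L-I$ and comparing nullities rather than invoking the interlacing bound of Lemma 2.1. Let $v$ be a pendant neighbour of $u$ in $G$, so $v$ is also a pendant neighbour of $u$ in $G'$. The plan is to give an explicit injective linear map from $\ker(L(G)-I)$ into $\ker(L(G')-I)$, exhibit one further independent vector, and then show that these vectors span the whole kernel.

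\textbf{Step 1: the eigenvector equation at a pendant vertex.} The eigen-equation of $L(G')x=x$ at $w$ reads $x_w - x_u = x_w$, so $x_u=0$. The same equation at $v$ gives $x_u=0$. Hence every $1$-eigenvector of $L(G)$ vanishes at $u$, and so does every $1$-eigenvector of $L(G')$.

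\textbf{Step 2: the embedding and the extra vector.} Given $x\in\ker(L(G)-I)$, extend it by $x_w=0$. At $w$ the equation holds because $x_u=0$. At $u$ the degree rises by one, but the extra diagonal term and the new off-diagonal term are $d\cdot x_u$ and $x_w$, both zero. So the equation at $u$ is unchanged, and every other vertex is unaffected. This map is clearly injective. The extra vector is $y=e_v-e_w$, which is supported on $\{v,w\}$. Check the equations: at $v$, $y_v-y_u=1=y_v$; at $w$, $y_w-y_u=-1=y_w$; at $u$, the equation reads $d_u y_u-\sum y_{\text{nbrs}} = -(1-1)=0=y_u$; every other vertex sees zeros. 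The vector $y$ is independent of the image because the image vectors have $x_w=0$ while $y_w\neq 0$. Therefore $m_{G'}(1)\ge m_G(1)+1$.

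\textbf{Step 3: the spanning argument, and the main obstacle.} Conversely, let $z\in\ker(L(G')-I)$ and set $z'=z+z_w y$, so that $z'_w=0$. Restricting $z'$ to $V(G)$ satisfies every equation of $L(G)$. At vertices other than $u$ this is clear. At $u$, the $G'$-equation is $d'_u z'_u-\sum_{G}z'-z'_w=z'_u$. Since $z'_u=0$ and $z'_w=0$, this reduces to $-\sum_G z'=0$, which is exactly the $G$-equation at $u$ with $z'_u=0$. This shows $\dim\ker(L(G')-I)\le m_G(1)+1$, and equality follows.

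The main obstacle, such as it is, is the vertex $u$, because the diagonal entry of $u$ changes when $w$ is added. This is handled entirely by the vanishing $x_u=0$ from Step 1, which requires that $u$ already have a pendant neighbour in $G$. That is precisely the quasi-pendant hypothesis.
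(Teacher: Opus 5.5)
The paper gives no proof of this statement; it is quoted from Wang et al., so there is no in-paper argument to compare against. Your proof is correct and self-contained. The vanishing $x_u=0$, forced by the pendant neighbour $v$ (and by $w$ in $G'$), is exactly what makes zero-extension and restriction insensitive to the change of the diagonal entry at $u$. Together with $z\mapsto z+z_w(e_v-e_w)$, this gives the decomposition
$\ker(L(G')-I)=\{\text{zero-extensions of }\ker(L(G)-I)\}\oplus\langle e_v-e_w\rangle$.

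Three small remarks:
\begin{enumerate}
\item The extra diagonal contribution at $u$ is $1\cdot x_u$, not $d\cdot x_u$. This is harmless because $x_u=0$.
\item You are implicitly using that $L$ is symmetric, so the multiplicity of $1$ equals the nullity of $L-I$.
\item Within the paper's own toolkit, your Step 3 can be replaced by Lemma 2.1. Deleting $e=uw$ from $G'$ leaves $G\cup K_1$, and $m_{K_1}(1)=0$, so $m_{G'}(1)\le m_G(1)+1$ follows at once. Only your Step 2, the lower bound via the explicit vector $e_v-e_w$, is then genuinely needed.
\end{enumerate}

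Your version has the advantage of describing the $1$-eigenspace of $G'$ explicitly. It also works verbatim for arbitrary graphs, not only connected or acyclic ones, which is the generality in which the lemma is stated.
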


\begin{figure}[H]
  \centering
  \includegraphics[width=1.0\linewidth]{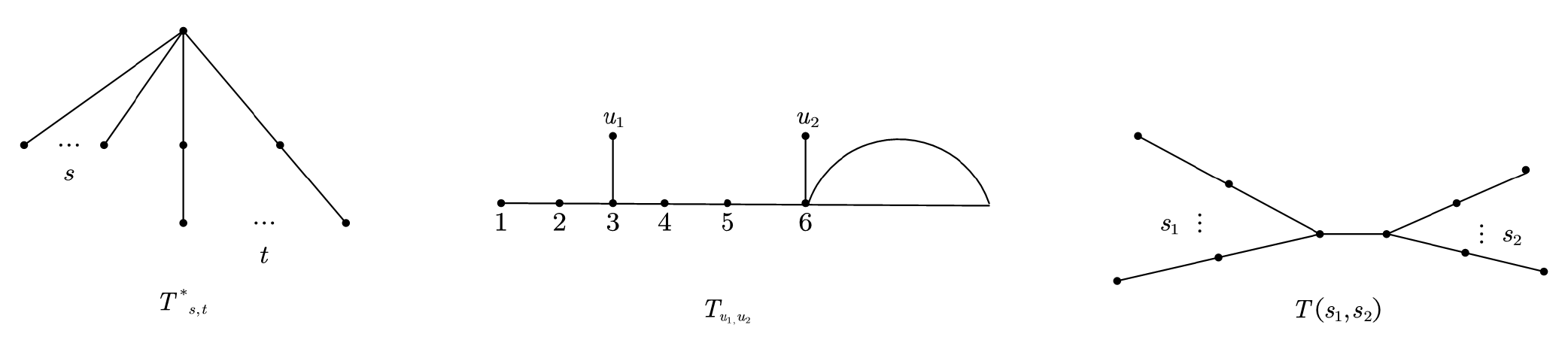}
   \caption{$T_n^*(s,t)$, $T_{u_1,u_2}$ and $T(s_1,s_2)$} 
\end{figure}

\begin{Lemma}
Let $T_n^*(s,t)$ and $T(s_1,s_2)$ be as shown in Fig. 3, where $s + t \ge 1$, $|T_n^*(s,t)| \neq 3$, $s_1 \ge 2$ and $s_2 \ge 2$. Then

(1) \cite{JM1}
\[
m_{T_n^*(s,t)}(1) =
\begin{cases} 
s - 1, & s \ge 1, \\
0,     & s = 0;
\end{cases}
\]

(2) \cite[2.10]{Tian1}
\[
m_{T(s_1,s_2)}(1) = 0.
\] 

\end{Lemma}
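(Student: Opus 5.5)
The plan is to work directly with the eigenvalue equation. A vector $x$ lies in the $1$-eigenspace of $L(T)$ exactly when, for every vertex $v$,
\[
(d_v-1)\,x_v=\sum_{w\sim v} x_w .
\]
Two local consequences drive everything. First, a pendant vertex $u$ with neighbour $w$ gives $0=x_w$, so every quasi-pendant vertex carries the value $0$. Second, on a pendant path $u\!-\!w\!-\!v$ the equation at $w$ gives $x_u=-x_v$, so the values propagate along pendant paths in a controlled way. Since $T$ is a tree, I would orient it from the pendant vertices inward. The eigenspace is then parametrised by a few free values, for instance the values at pendant vertices, subject to the linear constraints imposed at the branching vertices. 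Then $m_T(1)$ equals the number of free parameters minus the rank of those constraints.

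For part (1) I am reading $T_n^*(s,t)$ from Fig. 3 as a central vertex carrying $s$ pendant vertices and $t$ pendant copies of $P_3$; if the figure differs, the steps adapt. Under this reading I would first apply Lemma 2.2 $t$ times to strip off the pendant $P_3$'s without changing $m(1)$. This reduces to the star $K_{1,s}$ when $s\ge1$, and to a single vertex or a short path when $s=0$. The condition $|T_n^*(s,t)|\ne 3$ excludes the degenerate case $P_3$, where the reduction would not apply cleanly.

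For $K_{1,s}$ the Laplacian spectrum is $\{0,1^{(s-1)},s+1\}$, which gives $m(1)=s-1$. Alternatively, the equations above force the centre value to be $0$, after which the leaf values are constrained only by $\sum x_{\text{leaf}}=0$, again giving dimension $s-1$. For $s=0$ the reduced graph is $K_1$, for which $1$ is not an eigenvalue. This yields the stated formula.

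For part (2) I would treat $T(s_1,s_2)$ in the same spirit: two branch vertices carrying $s_1\ge2$ and $s_2\ge2$ pendant structures, joined by a path. I would first discard any pendant $P_3$'s with Lemma 2.2. I would then write the equations at each hub: the hub values are forced to $0$ by the pendant vertices, and the conditions at the vertices along the connecting path propagate these zeros. The goal is to show that every free parameter is killed. Concretely, each hub equation $(d-1)x_{\text{hub}}=\sum$ of neighbour values, combined with $s_i\ge2$, forces the pendant-side contributions to cancel. The path equations then force the internal values to vanish step by step, so $x=0$ and $m(1)=0$.

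As a cross-check I would use Lemma 2.1. Deleting the bridge edge between the two sides gives two graphs of type $T_n^*$, whose multiplicities are known from part (1), so $m(1)$ changes by at most one. Lemma 2.3 can also be used in reverse, removing pendant vertices at quasi-pendant hubs while tracking $m(1)$ exactly.

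The main obstacle is part (2), in particular the middle of the connecting path. There the propagation $x_{i+1}=(d_i-1)x_i-x_{i-1}$ along degree-$2$ vertices becomes $x_{i+1}=x_i-x_{i-1}$, which is periodic of period $6$ in general. I have to verify that the boundary conditions at both hubs are incompatible with any nonzero solution for every admissible length, which would be a case check on the path length modulo $6$. If the hypotheses $s_1,s_2\ge2$ are exactly what rule out the resonant lengths, this check closes the argument. Otherwise I would fall back on the edge-deletion bound from Lemma 2.1 combined with an explicit rank computation.
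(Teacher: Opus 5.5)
There is a genuine gap: you have misidentified both trees, so your arguments concern different trees. (The paper does not prove this lemma; it quotes \cite{JM1} and \cite[2.10]{Tian1}.) The paper's own uses of the lemma fix what the trees are. Part (1) is applied to the ``$P_2$-star'' in Case 1 with $d=4$, and to $T_3$ in Case 1.1. In both cases this is a centre carrying only pendant $P_2$'s. So $T_n^*(s,t)$ is a centre $c$ with $s$ pendant vertices and $t$ pendant $P_2$'s, with $n=1+s+2t$; it does not carry $t$ pendant $P_3$'s. Lemma 2.2 cannot remove pendant $P_2$'s. Your account of the hypothesis $|T_n^*(s,t)|\neq 3$ is also wrong. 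Under your reading that hypothesis is vacuous, since the only order-$3$ case is $K_{1,2}$, where the formula holds. For the true family it excludes $T^*(0,1)=P_3$, which has $m(1)=1\neq 0$. The correct computation is short:
\begin{itemize}
\item On a pendant $P_2$ $c\,a\,b$, the equations at $b$ and $a$ give $x_a=0$ and $x_b=-x_c$.
\item If $s\ge 1$, the leaves force $x_c=0$, so every $x_a=x_b=0$. The centre equation then leaves only $\sum_i x_{\ell_i}=0$, which gives $s-1$.
\item If $s=0$, the centre equation reads $(t-1)x_c=0$. This forces $x=0$ exactly when $t\ge 2$.
\end{itemize}

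Part (2) is not actually proved. You assert that every free parameter is killed, and the mechanism you describe cannot work.
\begin{itemize}
\item \textbf{What $T(s_1,s_2)$ actually is.} In Case 1.2.3.2, the tree $\widetilde T$ of type $T(s_1,s_2)$ arises by cutting $v_4v_5$ with Lemma 2.7. So it consists of two \emph{adjacent} vertices $c_1=v_3$ and $c_2=v_4$ carrying $s_1$ and $s_2$ pendant $P_2$'s.
\item \textbf{Your mechanism does not apply.} These hubs are not quasi-pendant, so nothing forces their values to $0$. There is also no long connecting path, so the period-$6$ analysis is beside the point.
\item \textbf{Under your own reading the conclusion is false.} If the hubs carried $s_i\ge 2$ pendant vertices, then for two leaves $\ell,\ell'$ at one hub, $e_\ell-e_{\ell'}$ would be a $1$-eigenvector. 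The hub equation constrains the leaf values only through their sum; it does not force each to vanish.
\item \textbf{The fallback is too weak.} Deleting $c_1c_2$ leaves $T^*(0,s_1)\cup T^*(0,s_2)$, each with $m(1)=0$, so Lemma 2.1 only gives $m\le 1$.
\end{itemize}
What you need are the two hub equations. Since $x_a=0$ on every pendant $P_2$, they read $s_1x_{c_1}=x_{c_2}$ and $s_2x_{c_2}=x_{c_1}$. Hence $(s_1s_2-1)x_{c_1}=0$, and since $s_1s_2\ge 4$ both hub values vanish. Then $x_b=-x_c=0$ on every leaf, so $x=0$.
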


\begin{Lemma}\cite{Tian1}
Let $T$ be a tree with an internal path $P_5 = u_1u_2u_3u_4u_5$. Let $H$ be the tree obtained from $T$ by deleting the vertices $\{u_2, u_3, u_4\}$ and joining $u_1$ and $u_5$ with an edge. Then $m_{T}(1) = m_{H}(1)$.
\end{Lemma}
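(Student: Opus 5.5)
The statement to prove is the last lemma: deleting the middle three vertices of an internal path $u_1u_2u_3u_4u_5$ and joining $u_1u_5$ does not change $m(1)$. I would argue directly with eigenvectors of $L$ for eigenvalue $1$, setting up a linear isomorphism between the eigenspaces $\ker(L(T)-I)$ and $\ker(L(H)-I)$. Internality gives $d_{u_2}=d_{u_3}=d_{u_4}=2$ in $T$. For $x\in\ker(L(T)-I)$, write $x_i=x(u_i)$. The eigen-equations at $u_2,u_3,u_4$ read $(2-1)x_{i}=x_{i-1}+x_{i+1}$, that is,
\[
x_2=x_1+x_3,\qquad x_3=x_2+x_4,\qquad x_4=x_3+x_5 .
\]

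First solve this linear recurrence. From the first two equations, $x_4=x_3-x_2=-x_1$, and from the last two, $x_2=x_3-x_4=-x_5$. Substituting into $x_3=x_2+x_4$ gives $x_3=-(x_1+x_5)$. The first equation then reads $x_2=x_1-x_1-x_5=-x_5$, which is consistent. Hence the values on $u_2,u_3,u_4$ are determined by $x_1,x_5$:
\[
(x_2,x_3,x_4)=(-x_5,\,-x_1-x_5,\,-x_1),
\]
and no constraint on $x_1,x_5$ arises.

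Next compare the equations at $u_1$ and $u_5$. In $T$, the equation at $u_1$ is $(d_{u_1}-1)x_1=\sum_{w\in N(u_1)\setminus\{u_2\}}x_w+x_2=\sum_{w\neq u_2} x_w - x_5$. In $H$, $u_1$ keeps the same degree, since $u_2$ is replaced by $u_5$. Its equation is therefore $(d_{u_1}-1)x_1=\sum_{w\neq u_5}x_w+x_5$.

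These two equations differ by the sign of the $x_5$ term. So the restriction map alone is not the right correspondence, and I would twist it by a sign. Removing the edge $u_1u_5$ from $H$, or the path $u_2u_3u_4$ from $T$, splits the tree into a component $A\ni u_1$ and a component $B\ni u_5$. Define $y$ on $H$ by $y=x$ on $A$ and $y=-x$ on $B$. The equations inside $A$ and inside $B$ are preserved, because negation preserves them. At $u_1$, the $T$-equation becomes $(d-1)y_1=\sum_{A\text{-nbrs}}y_w-x_5=\sum y_w+y_5$, which is exactly the $H$-equation. The same check works at $u_5$, where $x_2$ plays the role of $-x_1$.

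Conversely, given $y\in\ker(L(H)-I)$, I flip the sign on $B$ and extend to $u_2,u_3,u_4$ by the formula above. This yields an eigenvector of $T$. The two maps are linear and mutually inverse, so $m_T(1)=m_H(1)$.

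The only delicate point is the sign bookkeeping at $u_1$ and $u_5$. Once one notices that the path of length $4$ acts like an edge with a sign flip, the rest is routine verification. Trees make the flip globally consistent because $A$ and $B$ are separated.
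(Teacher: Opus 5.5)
Your proof is correct. The paper itself gives no proof of this lemma; it imports it from Tian et al. So there is no in-paper argument to compare against, and your sign-twisted correspondence between $\ker(L(T)-I)$ and $\ker(L(H)-I)$ is a valid self-contained proof. The key points all check out:
\begin{itemize}
\item The coefficient matrix of the equations at $u_2,u_3,u_4$ has determinant $-1$. So $(x_2,x_3,x_4)=(-x_5,-x_1-x_5,-x_1)$ is forced, and it works for arbitrary $x_1,x_5$.
\item Negating on $B$ turns the $T$-equations at $u_1$ and $u_5$ exactly into the $H$-equations.
\item Equations at all other vertices are untouched or merely negated.
\end{itemize}

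One slip: at $u_5$ the path neighbour is $u_4$, so it is $x_4$, not $x_2$, that plays the role of $-x_1$.

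For reference, the same argument in matrix language reads as follows. Take the Schur complement of $L(T)-I$ on the unit diagonal pivots at $u_2,u_3,u_4$. This leaves $L(H)-I$ except for a $+1$ in the $(u_1,u_5)$ entry. The diagonal $\pm1$ similarity that negates $B$ then fixes that sign, giving $\operatorname{rank}(L(T)-I)=\operatorname{rank}(L(H)-I)+3$.

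Your argument uses only $d_{u_2}=d_{u_3}=d_{u_4}=2$ and the fact that, in a tree, the path separates $u_1$ from $u_5$. No degree assumption on $u_1$ or $u_5$ is needed.
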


\begin{Lemma}\cite{Tian1}
Let $T\cong T_{u_1,u_2}$, see Fig. 3, which has a local structure induced by $\{v_1, v_2, \cdots, v_6,u_1,u_2\}$. Let $T_1$ be the tree obtained from $T$ by deleting $\{v_1, v_2, v_3, u_1\}$. Then we get $m_{T}(1) = 1 + m_{T_1}(1)$.
\end{Lemma}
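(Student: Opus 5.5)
The plan is to peel off the deleted vertices in two stages, using only Lemma 2.3 (Wang et al.) and Lemma 2.2 (Grone et al.). I cannot see Fig.~3, so I am reading the local structure of $T_{u_1,u_2}$ as follows. $v_1$ and $u_1$ are pendant vertices, both adjacent to $v_2$. $v_2$ is adjacent to $v_3$, and $v_3$ has degree $2$ with its other neighbour $v_4$. The vertices $v_4,v_5,v_6,u_2$ form the symmetric part of the configuration, which is not touched. The whole argument depends on this reading.

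\emph{Step 1.} Let $T'=T-u_1$. In $T'$, the vertex $v_2$ is still quasi-pendant, because its pendant neighbour $v_1$ remains. We recover $T$ from $T'$ by attaching the pendant vertex $u_1$ to the quasi-pendant vertex $v_2$. Lemma 2.3 then gives $m_T(1)=m_{T'}(1)+1$.

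\emph{Step 2.} In $T'$, the vertices $v_1,v_2,v_3$ induce a path $P_3$: $v_1$ is a leaf, $v_2$ now has degree $2$, and $v_3$ has degree $2$. This path is joined to the rest of the tree only through the edge $v_3v_4$. So $T'$ is obtained from $T_1=T-\{v_1,v_2,v_3,u_1\}$ and a $P_3$ by joining a pendant vertex of the $P_3$ to $v_4$. Lemma 2.2 gives $m_{T'}(1)=m_{T_1}(1)$.

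Combining the two steps gives $m_T(1)=1+m_{T_1}(1)$.

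The main obstacle is matching this reading to the actual figure. The argument needs two facts, both at the moment Step 1 is applied. First, $v_2$ must carry at least two pendant neighbours in $T$, so that it stays quasi-pendant after $u_1$ is removed. Second, once $u_1$ is gone, $\{v_1,v_2,v_3\}$ must be a genuine pendant $P_3$ attached at a single vertex. If the figure instead puts $u_1$ on $v_3$, or makes $v_3$ a branch vertex, I would adjust as follows. I would delete the pendant edge at the vertex with two leaves via Lemma 2.3, then look for a pendant $P_3$ (Lemma 2.2) or an internal $P_5$ (Lemma 2.5) to contract. The aim is to reach $T_1$ while losing exactly one unit of multiplicity.

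If no such sequence exists, the fallback is Lemma 2.1, which bounds the change in $m(1)$ under deleting an edge by $\pm 1$. I would cut the edge $v_3v_4$ and compute $m(1)$ of the small component directly, since it is a star-like tree covered by Lemma 2.4(1). Lemma 2.1 alone only gives an inequality in each direction, so the fallback would also require eigenvector arguments to pin down equality.
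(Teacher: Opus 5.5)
\textbf{Gap: the proposal proves the lemma for the wrong configuration.} Your two-step argument (Lemma 2.3 at $v_2$, then Lemma 2.2) is valid for the configuration you assumed. That configuration is not $T_{u_1,u_2}$, so the proposal does not prove the stated lemma.

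The paper only cites this result, but the way it uses it pins down Fig.~3. In Case 2.2.2.1 it is applied to $H_{28}$:
\begin{itemize}
\item $v_1v_2v_3$ lies on a diametral path, and $v_3$ is quasi-pendant of degree $3$ with leaf $u$.
\item $d_{v_2}=2$, and $d_{v_4}=2$ because $uv_3v_4$ is a pendant $P_3$ of $T-\{v_1,v_2\}$.
\item For $n\ge 12$, the only way $T'_3=T-\{v_1,\dots,v_4,u\}$ can fail to be reduced is $d_{v_5}=2$ with $v_6$ carrying a leaf.
\item Lemma 2.6 is then used to delete $\{v_1,v_2,v_3,u\}$.
\end{itemize}
So $u_1$ is the leaf at $v_3$, $u_2$ is a leaf at $v_6$, and $v_2,v_4,v_5$ have degree $2$. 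Your reading puts two leaves on $v_2$, which makes the tree non-reduced, so it cannot occur in $H_{28}$ or $G_3$.

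A warning sign is that $v_4,v_5,v_6,u_2$ play no role in your proof. In the true configuration they are essential. If $u_2$ is dropped, e.g.\ $T=v_1v_2\cdots v_6$ plus $u_1\sim v_3$, then $m_T(1)=m_{P_5}(1)=0$, but $m_{T_1}(1)=m_{P_3}(1)=1$.

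\textbf{Why the fallback does not work.} In the actual configuration your Step 1 cannot start, because no vertex of $T$ has two pendant neighbours. Lemma 2.5 needs three consecutive internal vertices of degree $2$, and these do not exist here since $d_{v_3}=3$. Lemma 2.1 only gives $\pm 1$.

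\textbf{The missing tool is Lemma 2.8.} With it, the drop by one happens at $v_6$ rather than at the start:
\begin{enumerate}
\item Since $v_3$ is quasi-pendant via $u_1$, Lemma 2.8 gives $m_T(1)=m_{T-\{v_1,v_2\}}(1)$.
\item In $T-\{v_1,v_2\}$ the path $u_1v_3v_4$ is a pendant $P_3$ at $v_5$. Lemma 2.2 gives equality with $m_{T^\ast}(1)$, where $T^\ast=T-\{v_1,\dots,v_4,u_1\}$.
\item In $T^\ast$ the vertex $v_6$ has the two leaves $v_5$ and $u_2$. Lemma 2.3 gives $m_{T^\ast}(1)=1+m_{T^\ast-v_5}(1)$.
\item $T_1$ arises from $T^\ast-v_5$ by attaching the pendant $P_2$ $v_5v_4$ at $v_6$, which is quasi-pendant via $u_2$. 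Lemma 2.8 gives $m_{T_1}(1)=m_{T^\ast-v_5}(1)$. If $v_6$ has no other neighbour, check directly that $m_{P_4}(1)=m_{P_2}(1)=0$.
\end{enumerate}
Chaining these gives $m_T(1)=1+m_{T_1}(1)$.
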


\begin{Lemma}\cite{Tian1}
Let $G$ be a graph with a pendant vertex $u$ and $u \sim v$. Suppose that $d_v \geq 3$ and $w$, distinct from $u$, is a neighbor of $v$. Let
$P_2 = xy$ be a path on two vertices. Denote by $\Gamma$ the graph obtained from $G - e_{vw}$ and $P_2 = xy$ by joining $w$ and $y$. Then we have
\[
m_{G}(1) = m_{\Gamma}(1).
\]
\end{Lemma}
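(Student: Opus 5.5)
We would prove the last lemma (the $\Gamma$ lemma) by an eigenvector argument for $L-I$. Recall that $m_G(1)=\dim\ker(L(G)-I)$. A vector $x$ lies in this kernel if and only if, for every vertex $z$,
\[
(d_z-1)x_z=\sum_{t\sim z}x_t .
\]
The plan is to construct an explicit linear bijection between $\ker(L(G)-I)$ and $\ker(L(\Gamma)-I)$. The two vertex sets differ only by the new vertices $x,y$, and the two edge sets differ only in that the edge $vw$ is replaced by the path $w\,y\,x$.

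\textbf{Consequences of the pendant vertex $u$.} In $G$, the equation at $u$ reads $0\cdot x_u=x_v$, so $x_v=0$. In $\Gamma$ the vertex $u$ is still pendant at $v$, so again $x_v=0$. In $\Gamma$ the vertex $x$ is pendant with neighbour $y$, so $x_y=0$. The equation at $y$ (degree $2$) then reads $x_y=x_x+x_w$, that is, $x_x=-x_w$.

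\textbf{The map from $G$ to $\Gamma$.} Given $x\in\ker(L(G)-I)$, keep all old coordinates and set $x_y=0$ and $x_x=-x_w$. We check each equation of $\Gamma$.
\begin{enumerate}
\item[(a)] At $w$: its degree is unchanged, since the neighbour $v$ is replaced by $y$. The old neighbour contributed $x_v=0$ and the new one contributes $x_y=0$, so the equation persists.
\item[(b)] At $v$: its degree drops by one, and its neighbour sum loses $x_w$. The old equation $(d_v-1)\cdot 0=\sum_{t\sim v}x_t$ says the neighbour sum was $0$. We would need the sum over the remaining neighbours, $-x_w$, to vanish, which would force $x_w=0$. \emph{This is the main obstacle:} $x_w$ need not be $0$ in general.
\end{enumerate}

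\textbf{Proposed repair.} To fix step (b), we would exploit the freedom at $u$. Since $d_u=1$, the equation at $u$ constrains only $x_v$, not $x_u$. So in $\Gamma$ we set $x'_u=x_u+x_w$, which restores the neighbour sum at $v$ to $0$. Changing $x_u$ touches only the equations at $u$ and at $v$: the equation at $u$ involves only $x_v=0$, and the equation at $v$ is exactly the one we need. So the map
\[
x\mapsto x'=(x \text{ on old vertices except } u,\ x'_u=x_u+x_w,\ x'_y=0,\ x'_x=-x_w)
\]
is linear and lands in $\ker(L(\Gamma)-I)$.

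\textbf{The inverse map and the degree hypothesis.} The inverse sets $x_u=x'_u-x'_w$ and drops $x'_x,x'_y$. The same local checks, run in reverse, confirm that the result lies in $\ker(L(G)-I)$. Injectivity holds because all old coordinates are recoverable from $x'$. The hypothesis $d_v\ge 3$ ensures that $v$ still has degree at least $2$ in $\Gamma$. Then $u$ remains pendant at $v$ rather than $uv$ becoming an isolated $K_2$, whose eigenvalue structure would break the relation $x_v=0$ (though in fact it still holds there).

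The main care will be in checking the equation at $v$ in both directions, and we expect that to be the only nontrivial point. The conclusion is $m_G(1)=m_\Gamma(1)$.
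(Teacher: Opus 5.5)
Your proof is correct. There is no in-paper proof to compare it with: the paper quotes this lemma from Tian et al.\ without proof and uses it as a black box, for instance to derive Lemma 2.8. Your linear isomorphism $\ker(L(G)-I)\to\ker(L(\Gamma)-I)$ is therefore a complete, self-contained proof. It is given by $\xi\mapsto\xi'$ with $\xi'_u=\xi_u+\xi_w$, $\xi'_y=0$, $\xi'_x=-\xi_w$, and all other coordinates kept. It also shows exactly how the pendant vertex is used: the otherwise unconstrained coordinate at $u$ absorbs the term $\xi_w$ that $v$ loses with the edge $vw$.

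The checks are complete. Only the equations at $u,v,w,x,y$ can change, since no other vertex is adjacent to $u$, $x$ or $y$, and only $v,w$ see the altered edge. You verify each of these correctly in the forward direction. The reverse checks also go through:
\begin{itemize}
\item at $v$, the $G$-neighbour sum reduces to $\sum_{t\sim_\Gamma v}\xi'_t=(d_v-2)\xi'_v=0$;
\item at $w$, the $G$-neighbour sum equals the $\Gamma$-neighbour sum because $\xi'_v=\xi'_y=0$.
\end{itemize}
The identities $\xi'_y=0$ and $\xi'_x=-\xi'_w$ hold for every $\xi'$ in the kernel, and these are exactly what make $\Gamma\to G\to\Gamma$ the identity. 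So the two maps are mutually inverse, and since $L$ is symmetric the multiplicities agree.

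Two small corrections.

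First, your closing paragraph gives $d_v\ge 3$ a role it does not have; the argument never uses it. The relation $\xi_v=0$ comes from the equation at $u$, which is the same in $G$ and $\Gamma$ whatever $d_v$ is. The equation at $v$ in $\Gamma$ has left-hand side $(d_v-2)\xi'_v=0$ for every value of $d_v$. When $d_v=2$, the path $w\,y\,x$ simply replaces $w\,v\,u$, so $\Gamma\cong G\cup K_2$. Since $m_{K_2}(1)=0$, that case is trivially true. The hypothesis only excludes this degenerate case, so drop the half-retracted justification rather than leave it in.

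Second, you use $x$ both for the vector and for the new vertex, which makes expressions such as $x_x=-x_w$ hard to read; rename one of them.
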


From Lemma 2.7 above, we can quickly obtain the following Lemma 2.8.

\begin{Lemma}
Let $G$ be a graph, $u$ a pendant vertex of $G$, and $v$ a quasipendant vertex of $G$ with $d_v \ge 2$. If a pendant path $P_2$ is attached to $v$ to obtain the graph $\Gamma$, then
\[
m_{G}(1) = m_{\Gamma}(1).
\]
\end{Lemma}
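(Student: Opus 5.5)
This statement is a direct consequence of Lemma 2.7, applied with the same pendant vertex $u$ and quasipendant vertex $v$. Let $G$, $u$, $v$ be as in the statement, so that $u$ is a pendant vertex with $u\sim v$. Let $P_2=xy$ be the path attached to $v$ by the edge $vy$, and call the resulting graph $\Gamma$. The idea is to read $\Gamma$ as the graph $\Gamma'$ produced by Lemma 2.7 from a suitable graph $G^{+}$ that contains $G$ plus a little extra.

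First, build $G^{+}$ from $G$ by adding a new vertex $w$ adjacent to $v$. Since $v$ is quasipendant in $G$, it is still adjacent to the pendant vertex $u$ in $G^{+}$. Moreover, $d_v(G^{+})=d_v(G)+1\ge 3$ because $d_v(G)\ge 2$. So by Lemma 2.3, $m_{G^{+}}(1)=m_G(1)+1$.

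Next, apply Lemma 2.7 to $G^{+}$ with pendant vertex $u$, its neighbour $v$ of degree at least $3$, and the neighbour $w\neq u$ of $v$. The lemma deletes the edge $vw$, which leaves $G$ together with an isolated vertex $w$. It then joins $w$ to the endpoint $y$ of a new $P_2=xy$. The result is $G$ disjoint union a path $P_3=wyx$, and Lemma 2.7 gives $m_{G^{+}}(1)=m_{G\cup P_3}(1)=m_G(1)+m_{P_3}(1)=m_G(1)+1$, since the Laplacian eigenvalues of $P_3$ are $0,1,3$. This only confirms the previous step, so this pairing of graphs gives nothing new and I will change the choice of auxiliary graph.

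The better choice is to start instead from $G^{\ast}$, obtained from $\Gamma$ by adding a pendant vertex $z$ adjacent to $x$. Then $G^{\ast}$ is $G$ with a pendant path $P_3=yxz$ attached at $v$ through the edge $vy$. By Lemma 2.2, $m_{G^{\ast}}(1)=m_G(1)$.

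Now view $G^{\ast}$ from the other end: $x$ is quasipendant in $\Gamma$ (its pendant neighbour is... ) — here $x$ is itself pendant in $\Gamma$, so Lemma 2.3 does not apply directly. The main obstacle is therefore matching $\Gamma$ to a configuration covered by the lemmas. My fallback, which I would try first, is a direct argument with eigenvectors:
\begin{itemize}
\item An eigenvector of $L(\Gamma)$ for eigenvalue $1$ must satisfy $f(y)=0$ at the pendant vertex $x$, because $(1-1)f(x)=f(y)$.
\item The equation at $y$ is then $2f(y)-f(x)-f(v)=f(y)$, which forces $f(x)=-f(v)$.
\item The same pendant equation at $u$ in $G$ forces $f(v)=0$, so $f(x)=0$.
\end{itemize}
Hence every $1$-eigenvector of $\Gamma$ vanishes on $x$ and $y$, and its restriction to $G$ is a $1$-eigenvector of $L(G)$, since the row of $v$ is unchanged ($d_v$ rises by $1$ but the added neighbour $y$ contributes $0$). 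Conversely, every $1$-eigenvector of $G$ also has $f(v)=0$ by the pendant equation at $u$, so extending it by zeros on $x$ and $y$ gives a $1$-eigenvector of $\Gamma$. This gives a bijection between the two eigenspaces, so $m_G(1)=m_\Gamma(1)$.
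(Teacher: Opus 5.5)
Your final eigenvector argument is correct and complete, but it takes a different route from the paper. The paper applies Lemma 2.7 to $\Gamma$ itself rather than to an auxiliary graph. In $\Gamma$ the vertex $v$ is adjacent to the pendant vertex $u$ and has $d_v(\Gamma)=d_v(G)+1\ge 3$; this is exactly where the hypothesis $d_v\ge 2$ is used. Taking $w=y$, deleting $vy$ and hanging a new $P_2$ at $y$ turns $\Gamma$ into $G\cup P_4$. Hence $m_\Gamma(1)=m_G(1)+m_{P_4}(1)=m_G(1)$.

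Your two abandoned attempts went nowhere because you applied Lemma 2.7 to $G^{+}$ (and then tried to build $G^{\ast}$) instead of to $\Gamma$. They, and the dangling ``its pendant neighbour is\dots'' sentence, should be dropped from the write-up.

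Your argument shows directly that every $1$-eigenvector of $L(\Gamma)$ vanishes on $v$, $y$ and $x$. It then shows that restriction to $V(G)$ and zero-extension are mutually inverse linear maps between the two eigenspaces. At the row of $v$, say explicitly that the diagonal increase is multiplied by $f(v)=0$ and the new off-diagonal entry by $f(y)=0$. The row does change; it is only the equation that stays the same. Also, the vanishing of $f(v)$ for eigenvectors of $\Gamma$ comes from the equation at $u$ in $\Gamma$, not in $G$; it is the same equation.

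What each approach buys: your version is self-contained, avoids the cited Lemma 2.7, and never uses $d_v\ge 2$, so it proves a slightly more general statement. The paper's version is a one-liner, but it depends on Lemma 2.7 and its degree hypothesis.
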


\begin{proof}
By Lemma 2.7, we have
\[
m_{\Gamma}(1) = m_{G}(1) + m_{P_4}(1) = m_{G}(1).
\]
\end{proof}

\begin{figure}[H]
  \centering
  \includegraphics[width=1.0\linewidth]{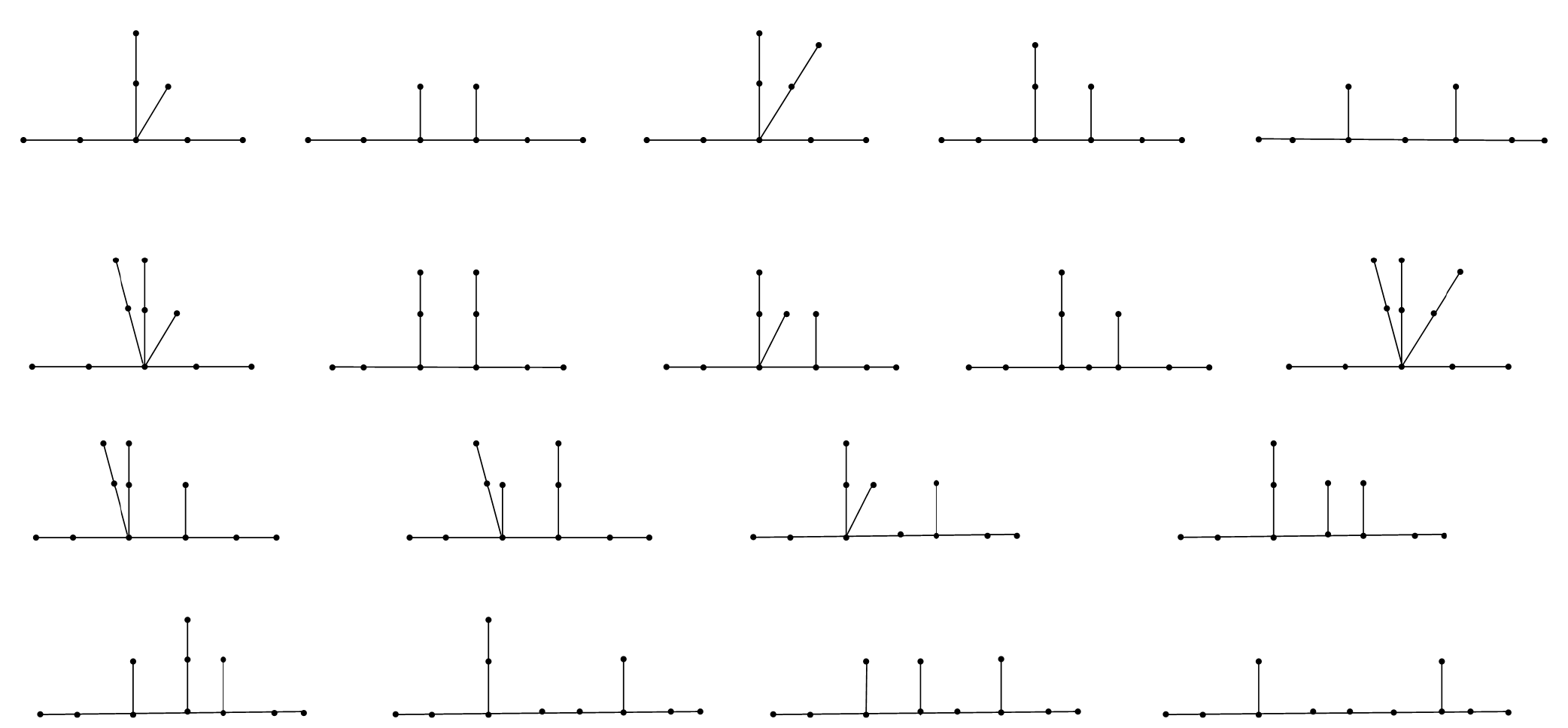}
   \caption{$m_{T}(1)=0$} 
\end{figure}

\begin{Lemma}
If $T$ is any tree in Fig. 4, then $m_{T}(1) = 0$.
\end{Lemma}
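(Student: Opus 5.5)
We cannot see Fig. 4, so the plan is to treat each tree $T$ there uniformly. We reduce $T$ step by step, using the multiplicity-preserving operations already established, until we reach a tree whose multiplicity of $1$ is known to be $0$. The available tools are:
\begin{itemize}
\item Lemma 2.2, which deletes a pendant $P_3$;
\item Lemma 2.5, which contracts an internal $P_5$ to an edge;
\item Lemma 2.7 and Lemma 2.8, which detach or remove a pendant $P_2$ hanging at a quasi-pendant vertex;
\item Lemma 2.4, which gives the base values: $m_{T_n^*(0,t)}(1)=0$ and $m_{T(s_1,s_2)}(1)=0$.
\end{itemize}
Each of these steps preserves $m(1)$ exactly. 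So for each tree it suffices to exhibit a chain of such moves ending in a tree of the form $T_n^*(s,t)$ with $s\le 1$, in some $T(s_1,s_2)$ with $s_1,s_2\ge 2$, or in a small path such as $P_2$ or $P_4$.

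Concretely, I would proceed in the following order for each tree in Fig. 4.

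\textbf{Step 1.} Wherever a quasi-pendant vertex $v$ of degree at least $3$ carries a pendant vertex and also a pendant $P_2$, remove that $P_2$ using Lemma 2.8.

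\textbf{Step 2.} Whenever a quasi-pendant vertex $v$ of degree at least $3$ has a non-pendant neighbour $w$, apply Lemma 2.7 to cut the edge $vw$ and re-attach a $P_2$ at $w$. This splits $T$ into components. Since $m$ of a disjoint union is additive, each component's contribution can be computed separately. Typically the pieces are a star $K_{1,r}$ together with a pendant edge, and $m_{K_{1,r}}(1)=r-1$. This contribution must be accounted for carefully, and it will vanish once the pendant vertices at $v$ have been reduced to a single one.

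\textbf{Step 3.} Shorten every long internal path by multiples of $3$ using Lemma 2.5. Delete pendant paths of length $3$ using Lemma 2.2.

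After these steps each tree should collapse to a small tree on at most about $8$ vertices.

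For any residual small tree not covered by Lemma 2.4, verify $m(1)=0$ directly. The method is to suppose $L x = x$ and propagate the equations from the leaves. At a leaf $u$ with neighbour $v$, the equation $(d_u-1)x_u=x_v$ gives $x_v=0$. Feeding this inward through the vertex equations $(d_w-1)x_w=\sum_{z\sim w}x_z$ should eventually force all entries to vanish. This leaf-propagation argument is routine for trees of this size.

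The main obstacle is Step 2. Lemma 2.7 changes the structure, and applying it in the wrong place can create components with positive multiplicity, for example a star with two or more leaves, or a tree $T_n^*(s,t)$ with $s\ge 2$. The order of cuts therefore has to be chosen so that every resulting component has $m(1)=0$. My first choice would be to always cut at a quasi-pendant vertex that carries exactly one pendant vertex, so that the detached piece is a $P_2$ or $P_4$, with $m_{P_2}(1)=m_{P_4}(1)=0$. If some tree in Fig. 4 has no such vertex, I would fall back on the direct eigenvector-propagation argument for that tree.
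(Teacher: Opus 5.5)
Your plan is essentially the paper's argument: the paper treats the first tree of Fig.~4 by repeatedly deleting pendant $P_2$'s at quasi-pendant vertices via Lemma 2.8 until only $P_4$ remains, and says the other trees are handled the same way or directly by Lemma 2.4, which is what your Steps 1--3 with Lemma 2.4 as the base case amount to. One correction to your ``main obstacle'': every move you list is an exact identity and $m(1)$ is additive over components, so if $m_T(1)=0$ then no legitimate application of Lemma 2.7, in any order, can produce a component with positive multiplicity --- such a component would refute the lemma rather than signal a bad choice of cut.
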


\begin{proof}
We only consider the case where $T$ is the first tree in Fig. 4; the other cases can either be proved similarly or follow directly from Lemma 2.4.
By Lemma 2.8, deleting any pendant path $P_2$ attached to a quasipendant vertex does not change the multiplicity of the Laplacian eigenvalue $1$ of the graph. Therefore,
\[
m_{T}(1) = m_{P_4}(1) = 0.
\]

\end{proof}

\begin{proof}[Proof of Theorem 1.3]

We first prove the sufficiency, i.e., if $T \cong G_2$ or $G_3$, then $m_{T}(1) = \frac{n-8}{4}$.

When $T \cong G_2$, by Lemma 2.8 we have $m_{T}(1) = m_{T-\{u,v\}}(1)$. Note that $T-\{u,v\} \cong G_1$, so by Theorem 1.1,
\[
m_{T}(1) = m_{T-\{u,v\}}(1) = \frac{n(T-\{u,v\})-6}{4} = \frac{n-8}{4}.
\]

If $T \cong G_3$, we proceed by induction on $n$. When $n = 8$, $T$ is the second graph in Fig. 4, so $m_{T}(1) = 0 = \frac{n-8}{4}$, and the conclusion holds. Now assume $n \ge 12$ and that the conclusion holds for all trees of order less than $n$. By the symmetry of $T \cong G_3$, we may assume $k \ge 2$ (where $k$ is as marked in $G_3$). Then by Lemma 2.6,
\[
m_{T}(1) = 1 + m_{T-\{v_1, v_2, v_3, w\}}(1).
\]
Note that $T-\{v_1, v_2, v_3, w\}$ is again of the form $G_3$. By the induction hypothesis,
\[
m_{T-\{v_1, v_2, v_3, w\}}(1) = \frac{n' - 8}{4},
\]
where $n' = |T-\{v_1, v_2, v_3, w\}|$. Hence
\[
m_{T}(1) = 1 + m_{T-\{v_1, v_2, v_3, w\}}(1) = 1 + \frac{n' - 8}{4} = \frac{n-8}{4}.
\]

Now we prove the necessity.

When $7 \le n \le 11$, Fig. 4 shows all graphs satisfying the condition.
Note that when $n = 7$, $T$ can only be of the form $G_4$. When $n = 8$, $T$ can only be either the first or the second graph in the first row of Fig. 4, which are exactly $G_2$ and $G_3$, respectively. Therefore, for $7 \le n \le 11$, the conclusion holds by Lemma 2.9. So we assume $n \ge 12$ and that the conclusion holds for all trees $T'$ with $7 \le |T'| < n$.

Let $v_1 \sim v_2 \sim v_3 \sim \dots \sim v_{d-1} \sim v_d \sim v_{d+1}$ be a diametral path of $T$. Since $T$ contains no pendant $P_3$, both $v_3$ and $v_{d-1}$ have degree at least $3$. This fact will be used throughout the following discussion without further mention.
\medskip

\textbf{Case 1.} $v_3, v_{d-1}$ are not quasi-pendant vertices.

If $d = 4$, then $T$ is a $P_2$-star. By Lemma 2.4 we have $m_{T}(1)=0 < \frac{n-8}{4}$. Hence we assume $d \ge 5$.

Let $T - e_{3,4} = T_3 \cup T_4$, where $T_3$ is the component containing $v_3$ and $T_4$ is the component containing $v_4$.

\medskip

\begin{figure}[H]
  \centering
  \includegraphics[width=1.0\linewidth]{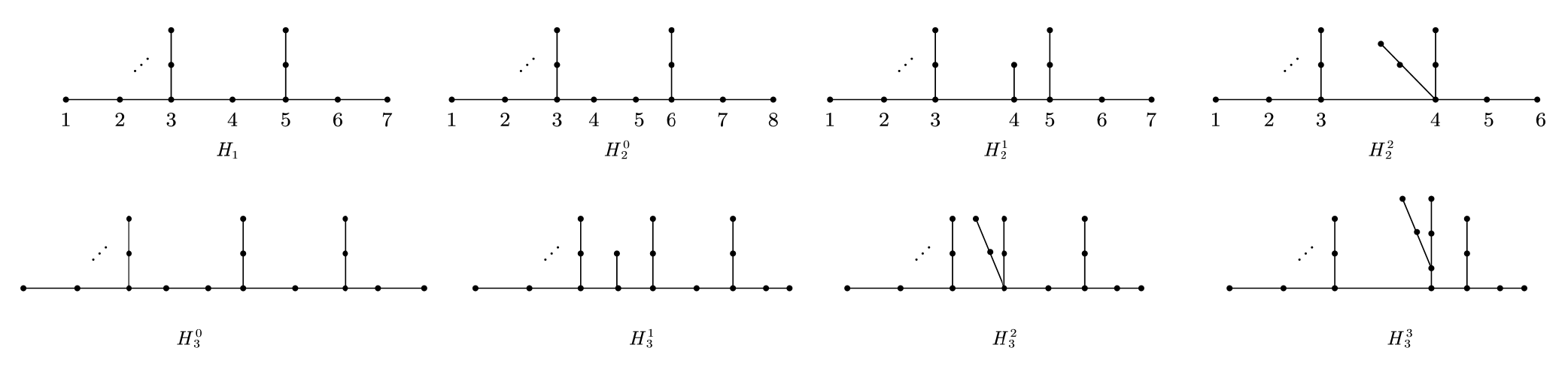}
   \caption{$H_1$, $H^{i}_2$ and $H^{j}_3$ } 
\end{figure}

\textbf{Case 1.1.} $T_4$ contains no pendant $P_3$ and $P(T_4)=q(T_4)$.

Clearly $|T_4| \ge 6$. If $|T_4| = 6$, then $T \cong H_1$. Note $T \not\cong G_5$, so $n > 13$. By Lemma 2.1,
$m_{T}(1) \le 1 + m_{T_3}(1) + m_{T_4}(1)$.
From Lemma 2.4, we know that \( m_{T_3}(1) = m_{T_4}(1) = 0 \). Therefore,
$m_{T}(1) \le 1 < \frac{n-8}{4}$.

If $|T_4| \ge 7$ and $T_4 \cong G_4$ or $G_5$, then $T \cong H^0_2$,  $H^1_2$,  $H^2_2$,  $H^0_3$, $H^1_3$,  $H^2_3$ or $H^3_3$. 
We only discuss the case $T \cong H^0_2$; the other cases can be proved similarly and yield the same conclusion.
If $T \cong H^0_2$ and $d_{v_3}=3$, then by computer calculation $m_{T}(1)=0 < \frac{n-8}{4}$. If $d_{v_3} > 3$, then $|T| \ge 14$. Clearly $T_3, T_4$ are both of the form $T_n^*(s,t)$. By Lemma 2.4, $m_{T_3}(1)=m_{T_4}(1)=0$. Hence
\[
m_{T}(1) \le 1 + m_{T_3}(1) + m_{T_4}(1) = 1 < \frac{n-8}{4}.
\]

If $|T_4| \ge 7$ and $T_4 \not\cong G_4, G_5$, then by the induction hypothesis,
\[
m_{T_4}(1) \le \frac{n(T_4)-8}{4},
\]
so
\[
m_{T}(1) \le \frac{n(T_4)-8}{4} + 1 = \frac{n-9}{4} < \frac{n-8}{4}.
\]

\medskip

\begin{figure}[H]
  \centering
  \includegraphics[width=1.0\linewidth]{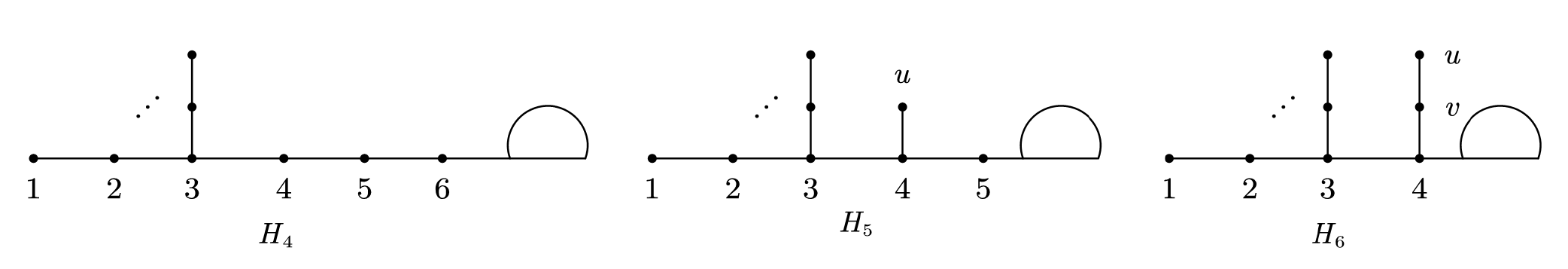}
   \caption{$H_4$, $H_5$ and $H_6$ } 
\end{figure}
\textbf{Case 1.2.} $T_4$ contains a pendant $P_3$.

Then $T$ is isomorphic to one of $H_4, H_5, H_6$.

\medskip

\textbf{Case 1.2.1.} If $G \cong H_4$, let $T'$ be the graph obtained by deleting $v_4, v_5, v_6$ and then joining $v_3$ and $v_7$. Clearly $T'$ contains no pendant $P_3$ and is reduced. By Theorem 1.1 and Lemma 2.5,
\[
m_{T}(1) = m_{T'}(1) < \frac{n(T')-6}{4} \le \frac{n-3-6}{4} < \frac{n-8}{4},
\]
where the inequality $m_{T'}(1) < \frac{n(T')-6}{4}$ holds because $v_{d-1}$ is not a quasi-pendant vertex.
\medskip

\textbf{Case 1.2.2.} If $G \cong H_5$, let $T'_4 = T_4 - \{v_4, v_5, u\}$.

Applying Lemma 2.7 to $v_4$, we have:
\[
m_{T}(1) = m_{T''}(1) + m_{T_4}(1) = m_{T_4}(1),
\]
where $T''$ is the $P_2$-star obtained by attaching a pendant $P_2$ to $v_3$ of $T_3$. By Lemma 2.2, $m_T(1)=m_{T_4}(1) = m_{T'_4}(1)$.
Next, we examine $T'_4$.

\begin{figure}[H]
  \centering
  \includegraphics[width=1.0\linewidth]{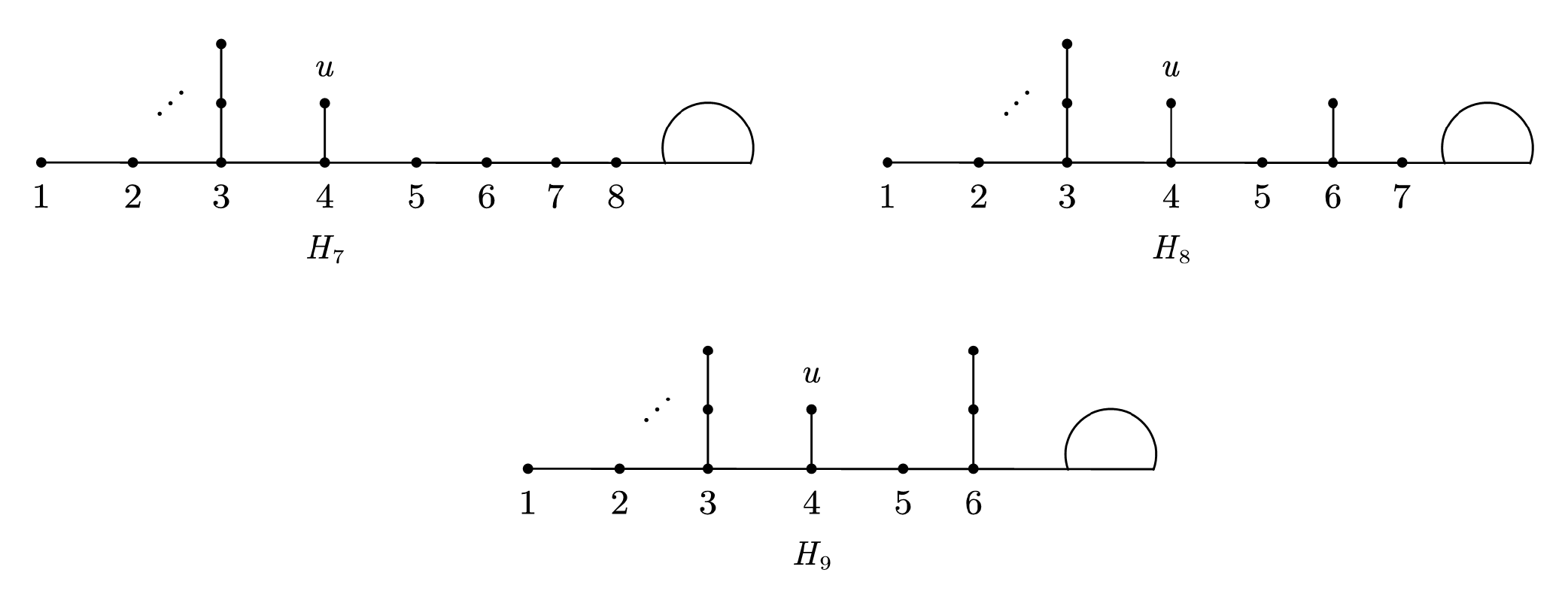}
   \caption{$H_7$, $H_8$ and $H_9$ } 
\end{figure}

\textbf{Case 1.2.2.1.} If $T'_4$ contains a pendant $P_3$, then $T$ is isomorphic to $H_7, H_8$ or $H_9$:
\begin{itemize}
\item If $T \cong H_7$, let $T'$ be obtained by deleting $\{5,6,7\}$ and joining $v_4$ and $v_8$. As in Case 1.2.1, we get $m_{T}(1) < \frac{n-8}{4}$.
\item If $T \cong H_8$, then $|T'_4| \ge 8$ and $T'_4$ is reduced. 
By Theorem 1.2,
\[
m_{T}(1) = m_{T'_4}(1) < \frac{n(T'_4)-2}{4} \le \frac{n-8-2}{4} < \frac{n-8}{4}.
\]

\item If $T \cong H_9$, a similar discussion as for $H_8$ leads to the same conclusion.
\end{itemize}

\begin{figure}[H]
  \centering
  \includegraphics[width=0.6\linewidth]{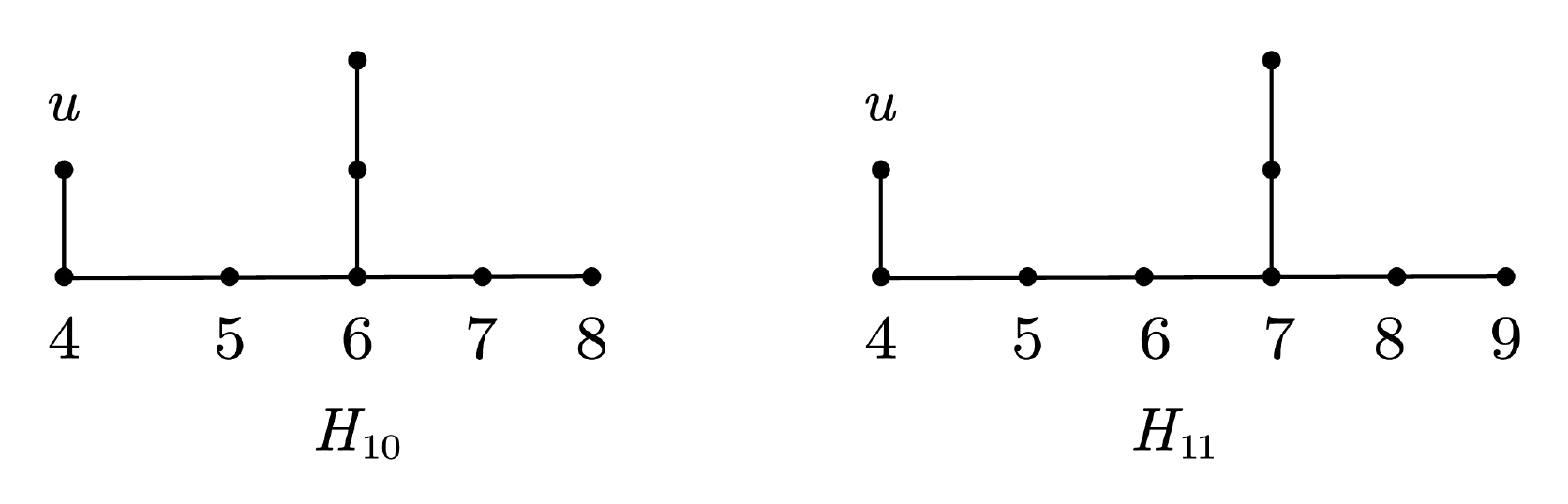}
   \caption{$H_{10}$ and $H_{11}$ } 
\end{figure}

\textbf{Case 1.2.2.2.} If $T'_4$ contains no pendant $P_3$ and $T'_4$ is reduced, we have $|T'_4| \ge 5$:
\begin{itemize}
\item If $|T'_4| = 5$, then $T_4 \cong H_{10}$, and clearly $m_{T}(1) = m_{T'_4}(1) = 0 < \frac{n-8}{4}$.
\item If $|T'_4| = 6$, then $T_4 \cong H_{11}$, and $m_{T}(1) = m_{T'_4}(1) = 0 < \frac{n-8}{4}$.
\item If $|T'_4| \ge 7$ and $T'_4 \cong G_4$ or $G_5$, then
\[
m_{T}(1) = m_{T'_4}(1) = \frac{n(T'_4)-7}{4} < \frac{n-8}{4}.
\]
\item If $|T'_4| \ge 7$ and $T_4 \not\cong G_4, G_5$, by the induction hypothesis,
\[
m_{T}(1)=m_{T'_4}(1) \le \frac{n(T'_4)-8}{4} < \frac{n-8}{4}.
\]
\end{itemize}

\begin{figure}[H]
  \centering
  \includegraphics[width=0.7\linewidth]{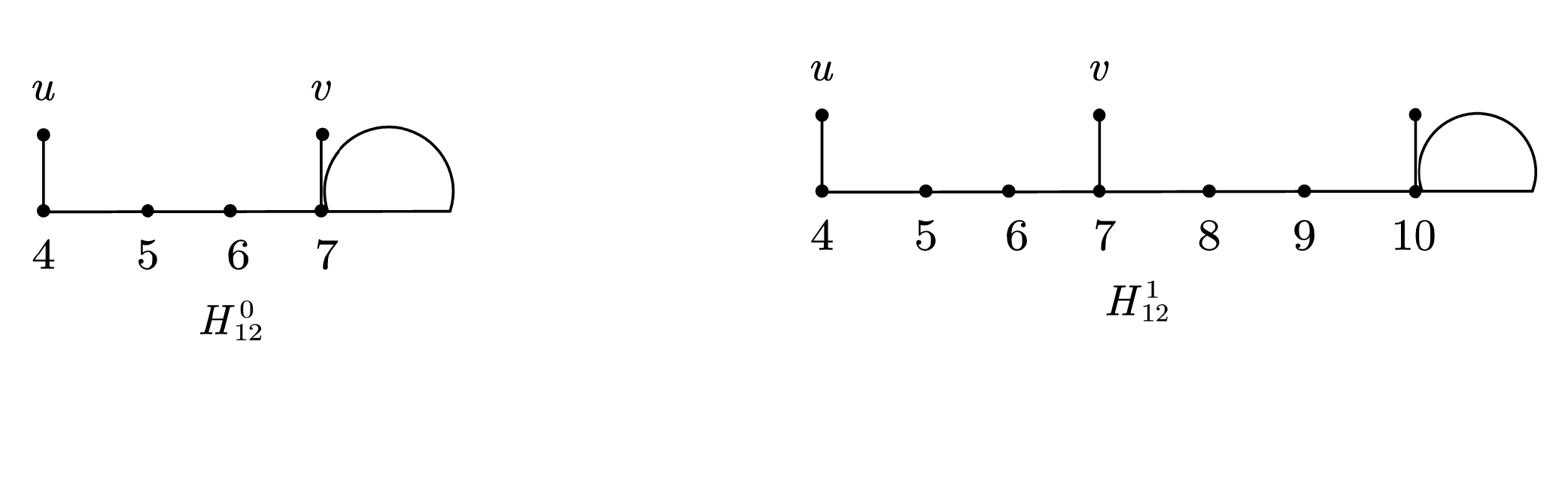}
   \caption{$H^0_{12}$ and $H^1_{12}$} 
\end{figure}
\textbf{Case 1.2.2.3.} If $T'_4$ is not reduced, then $T \cong H^0_{12}$. 
By Lemma 2.3,
\[
m_{T'_4}(1) = 1 + m_{T_7}(1),
\]
where $T_7$ is the component of $T - e_{6,7}$ containing $v_7$. Clearly $T_7$ is reduced and $|T_7| \ge 7$.
Next, we examine $T_7$.
\begin{itemize}
\item If $|T_7| = 7$, then $T_7 \cong G_4$, so $m_{T}(1) = 1 + m_{T_7}(1) = 1 < \frac{n-8}{4}$ by $n \ge 16$.
\item If $|T_7| \ge 8$ and $T_7 \cong G_5$, then $m_{T}(1) = 1 + m_{T_7}(1) = 2 < \frac{n-8}{4}$ by $n \ge 20$.
\item If $T_7 \not\cong G_4, G_5$ and $T_7$ contains no pendant $P_3$, then by the induction hypothesis,
\[
m_{T_7}(1) \le \frac{n(T_7)-8}{4},
\]
so
\[
m_{T}(1) = 1 + m_{T_7}(1) \le 1 + \frac{n(T_7)-8}{4} \le 1 + \frac{n-9-8}{4} < \frac{n-8}{4}.
\]
\end{itemize}

\begin{figure}[H]
  \centering
  \includegraphics[width=0.7\linewidth]{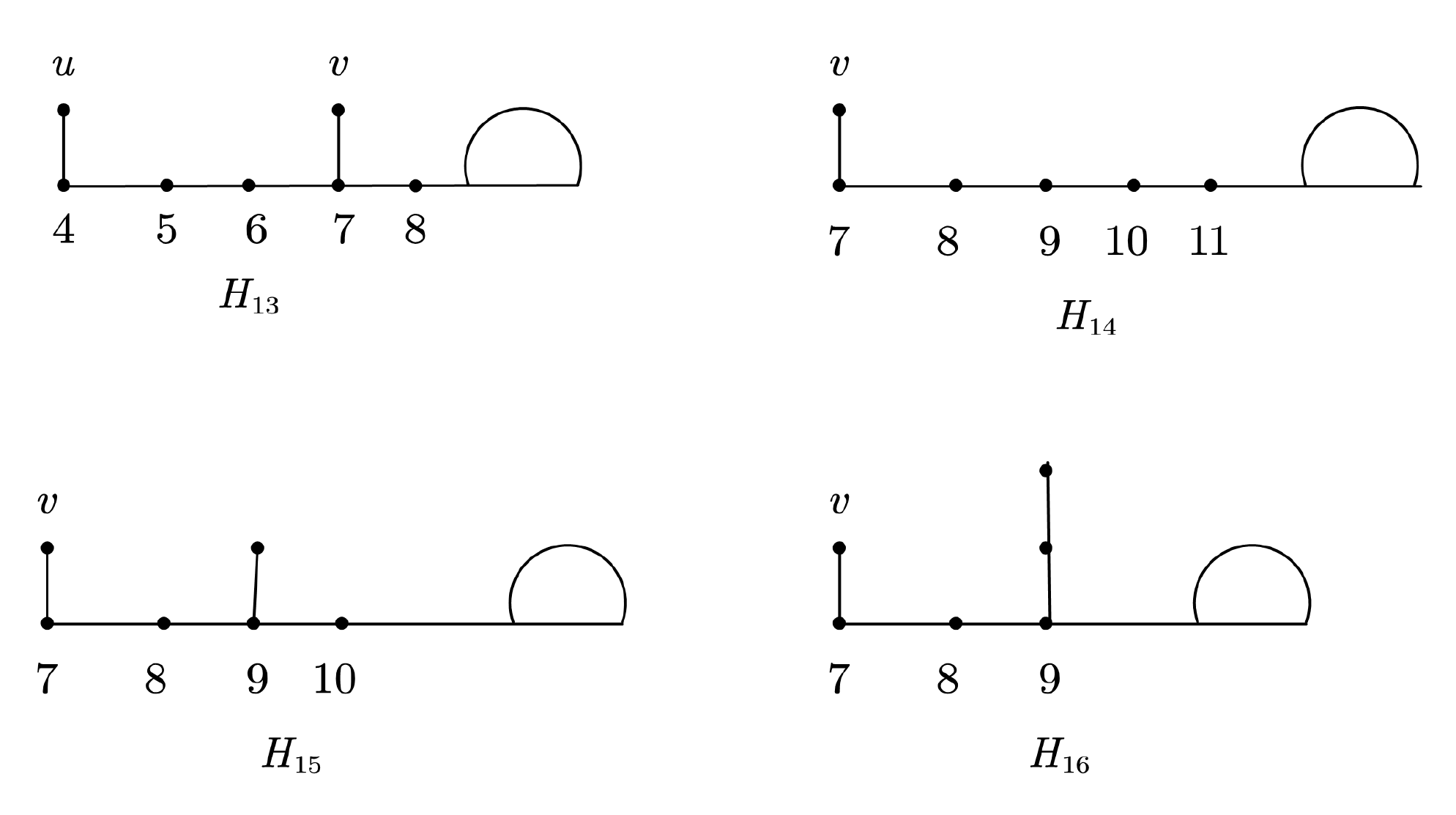}
   \caption{$H_i$ for $13\le i\le 16$} 
\end{figure}

If $T_7$ contains a pendant $P_3$, then $T_4 \cong H_{13}$. Let $T'_7 = T_7 - \{v_7, v_8, v\}$. Clearly $|T'_7| \ge 5$ and $m_{T_7}(1) = m_{L(T'_7)}(1)$ by Lemma 2.2.

Now we discuss $T'_7$:
\begin{itemize}
\item If $|T'_7| = 5$, then $T_7 \cong H_{10}$, so $m_{T}(1) = 1 + m_{T_7}(1) = 1 < \frac{n-8}{4}$ by $n \ge 17$.
\item If $|T'_7| = 6$, then $T_7 \cong H_{11}$, so $m_{T}(1) = 1 + m_{T_7}(1) = 1 < \frac{n-8}{4}$ by $n \ge 18$.
\item If $|T'_7| = 7$, then $T'_7 \cong G_4$, so $m_{T}(1) = 1 + m_{T_7}(1) = 1 + m_{L(T'_7)}(1) = 1 < \frac{n-8}{4}$ by $n \ge 19$.
\item If $|T'_7| \ge 8$ and $T'_7 \cong G_5$, then $m_{T}(1) = 1 + m_{T_7}(1) = 1 + m_{L(T'_7)}(1) = 2 < \frac{n-8}{4}$ by $n \ge 23$.
\item If $|T'_7| \ge 8$, $T'_7 \not\cong G_5$, $T'_7$ has no pendant $P_3$ and is reduced, then by the induction hypothesis,
\[
m_{L(T'_7)}(1) \le \frac{n(T'_7)-8}{4},
\]
so
\[
m_{T}(1) = 1 + m_{L(T'_7)}(1) \le 1 + \frac{n(T'_7)-8}{4} \le 1 + \frac{n-12-8}{4} < \frac{n-8}{4}.
\]
\item If $|T'_7| \ge 8$ and $T'_7$ has a pendant $P_3$, then $T_7 \cong H_{14}, H_{15}$ or $H_{16}$.
\begin{itemize}
\item If $T_7 \cong H_{14}$, let $T'$ be obtained by deleting $v_8, v_9, v_{10}$ and joining $v_7, v_{11}$. As in Case 1.2.1, we get $m_{T}(1) < \frac{n-8}{4}$.
\item If $T_7 \cong H_{15}$ , then $T'_7$ is reduced and $|T'_7| \ge 7$. By Theorem 1.2,
\[
m_{T'_7}(1) < \frac{n(T'_7)-2}{4},
\]
so
\[
m_{T}(1) = 1 + m_{T'_7}(1) < 1 + \frac{n(T'_7)-2}{4} \le 1 + \frac{n-12-2}{4} = \frac{n-8}{4}.
\]
\item If $T \cong H_{16}$, a similar discussion as for $H_{15}$ leads to the same conclusion.
\end{itemize}
\item If $T'_7$ is not reduced, then $T_7 \cong H^{1}_{12}$. Then
\[
m_{T}(1) = 1 + m_{T_7}(1) = 2 + m_{T_{10}}(1),
\]
where $T_{10}$ is the component of $T - e_{9,10}$ containing $v_{10}$.Observe $H^0_{12}$ and $H^1_{12}$. Then, by repeating the same discussion for $T_{10}$ and using symmetry, we find that if $m_{T}(1) \ge \frac{n-8}{4}$, then $T$ must be of the form $G_6$ (see Fig. 2).
By Lemmas 2.3 and 2.7, we easily obtain $m_{T}(1) = l-1$. Note that $n \ge 4(l-1)+14$, so
\[
\frac{n-8}{4} \ge \frac{4(l-1)+14-8}{4} = l-1 + \frac{6}{4} > m_{T}(1).
\]
\end{itemize}

\medskip

\begin{figure}[H]
  \centering
  \includegraphics[width=0.7\linewidth]{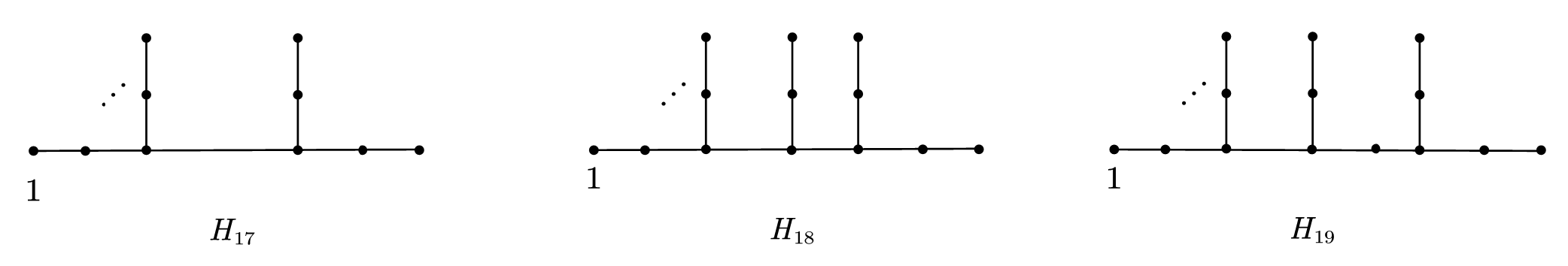}
   \caption{$H_i$ for $17\le i\le 19$} 
\end{figure}

\textbf{Case 1.2.3.} $T \cong H_6$

Clearly $|T_4| \ge 5$. If $|T_4| = 5$, then $T \cong H_{17}$ and $m_{T}(1)=0 < \frac{n-8}{4}$ by Lemma 2.4.

If $|T_4| > 5$, then $|T_4| \ge 8$. When $|T_4| = 8$, $T \cong H_{18}$, so
\[
m_{T}(1) \le 1 + m_{T_3}(1) + m_{T_4}(1) = 1 < \frac{n-8}{4}.
\]
When $|T_4| = 9$, we have $T \cong H_{19}$. Similarly, $m_{T}(1) < \frac{n-8}{4}$.

If $|T_4| \ge 10$, let $T'_4 = T_4 - \{u, v, v_4\}$. Then $|T'_4| \ge 7$ because $|T_4| \ge 10$.

If $T'_4 \cong G_4$, then
\[
m_{T}(1) \le 1 + m_{T_4}(1) = 1 + m_{T'_4}(1) = 1 < \frac{n-8}{4} \quad \text{by } n \ge 15.
\]
For $T'_4 \cong G_5$, we similarly get $m_{T}(1) \le 2 < \frac{n-8}{4}$ by $n \ge 19$.
Next, we consider the case where $|T_4| \ge 10$ and $T‘_4 \ncong G_4$, $G_5$.

\medskip

\begin{figure}[H]
  \centering
  \includegraphics[width=0.7\linewidth]{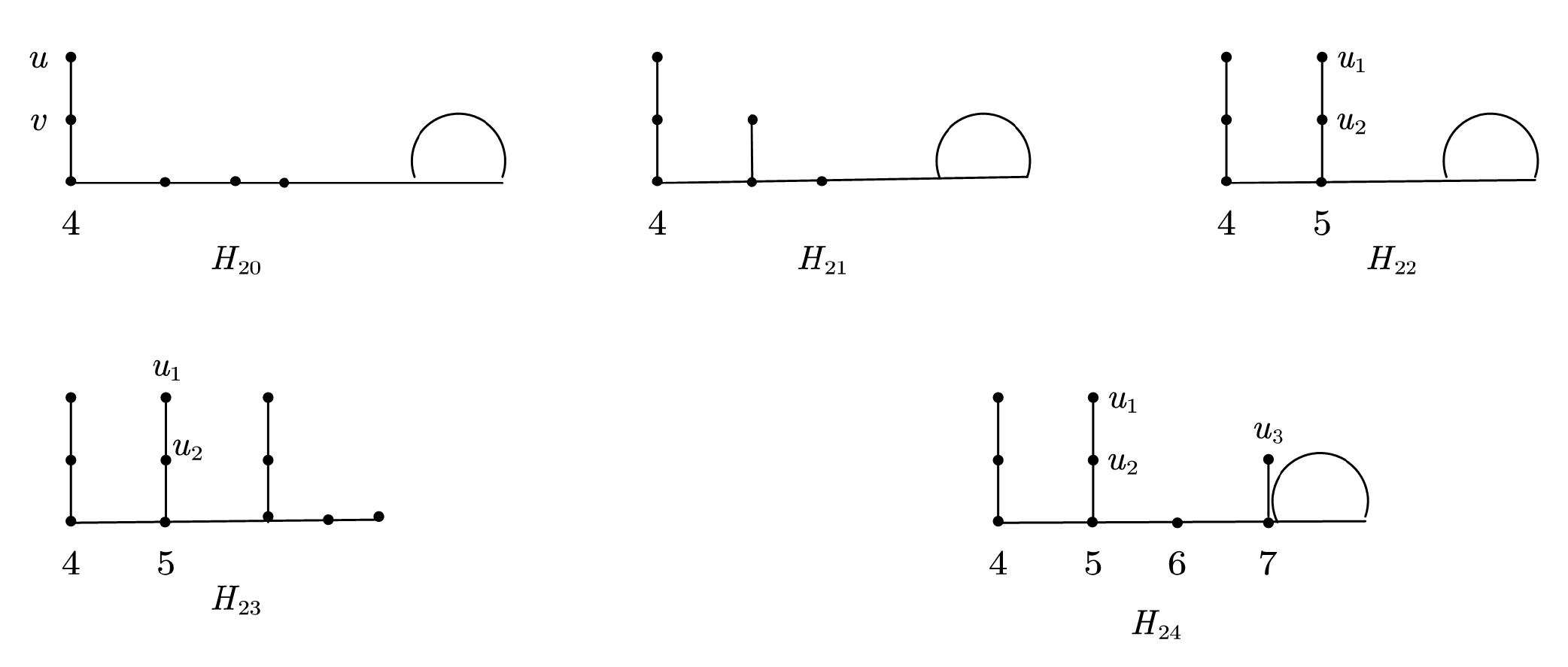}
   \caption{$H_i$ for $20\le i\le 24$} 
\end{figure}

\textbf{Case 1.2.3.1.} If $T'_4$ is reduced, contains no pendant $P_3$, and $T'_4 \not\cong G_4, G_5$, then by the induction hypothesis,
\[
m_{T'_4}(1) \le \frac{n(T'_4)-8}{4},
\]
so
\[
m_{T}(1) \le 1 + m_{T'_4}(1) \le 1 + \frac{n(T'_4)-8}{4} \le 1 + \frac{n-8-8}{4} < \frac{n-8}{4}.
\]

\medskip

\textbf{Case 1.2.3.2.} If $T'_4$ has a pendant $P_3$, then $T_4 \cong H_{20}, H_{21}$ or $H_{22}$.
\begin{itemize}
\item If $T_4 \cong H_{20}$, similarly to Case 1.2.1, we get $m_{T}(1) < \frac{n-8}{4}$.
\item If $T_4 \cong H_{21}$, then $T'_4$ is reduced and $|T'_4| \ge 8$. By Theorem 1.2,
\[
m_{T'_4}(1) < \frac{n(T'_4)-2}{4}.
\]
Applying Lemma 2.8 to $v_5$, we have
\[
m_{T}(1) = m_{\widetilde{T}}(1) + m_{T'_4}(1) < \frac{n(T'_4)-2}{4} \le \frac{n-8-2}{4} < \frac{n-8}{4},
\]
where $\widetilde{T}$ is a graph of the form $T(s_1, s_2)$ shown in Fig. 3.
\item If $T_4 \cong H_{22}$, let $T'_5 = T'_4 - \{v_5, u_1, u_2\}$. Then $|T'_5| \ge 4$ because $|T'_4| \ge 7$. Note $|T'_5| \neq 4$, so $|T'_5| \ge 5$.
\begin{itemize}
\item If $|T'_5| = 5$, then $T_4 \cong H_{23}$ and $m_{T_4}(1)=0$, so
\[
m_{T}(1) \le 1 + m_{T_3}(1) + m_{T_4}(1) = 1 < \frac{n-8}{4}.
\]
\item If $|T'_5| \ge 6$ and $T'_5$ is reduced, then by Theorem 1.2,
\[
m_{T'_5}(1) < \frac{n(T'_5)-2}{4} \le \frac{n-11-2}{4},
\]
so
\[
m_{T}(1) \le 1 + m_{T'_5}(1) < 1 + \frac{n-13}{4} < \frac{n-8}{4}.
\]
\item If $T'_5$ is not reduced, then $T_4 \cong H_{24}$. Apply Lemma 2.7 at $v_7$:
\[
m_{T}(1) = m_{T^*_6}(1) + m_{T_7}(1),
\]
where $T^*_6$ is obtained from $T_6$ (the component of $T-e_{6,7}$ containing $6$) by attaching a pendant $P_2$ at $v_6$, and $T_7$ is the component containing $v_7$. Clearly $m_{T^*_6}(1)=0$. Since $T_7$ is reduced and $|T_7| \ge 7$, by Theorem 1.2,
\[
m_{T_7}(1) < \frac{n(T_7)-2}{4}.
\]
Thus
\[
m_{T}(1) = m_{T^*_6}(1) + m_{T_7}(1) < \frac{n(T_7)-2}{4} \le \frac{n-12-2}{4} < \frac{n-8}{4}.
\]
\end{itemize}
\end{itemize}

\medskip

\begin{figure}[H]
  \centering
  \includegraphics[width=0.7\linewidth]{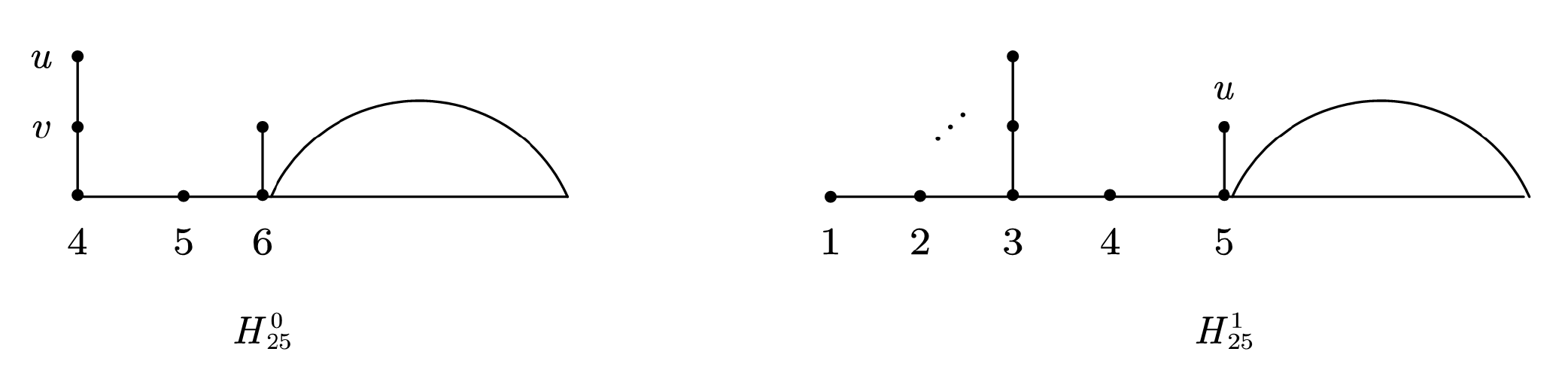}
   \caption{$H^0_{25}$ and $H^1_{25}$} 
\end{figure}

\textbf{Case 1.2.3.3.} If $T'_4$ is not reduced, then $T_4 \cong H^0_{25}$. Apply Lemma 2.7 at $v_6$:
\[
m_{T}(1) = m_{T^*_5}(1) + m_{T'_6}(1),
\]
where $T^*_5$ is obtained from $T'_5$ (the component of $T-e_{5,6}$ containing $v_5$) by attaching a pendant $P_2$ at $v_5$, and $T'_6$ is the component of $T-e_{5,6}$ containing $v_6$.
Clearly $m_{T^*_5}(1)=0$. Since $T'_6$ is reduced and $|T'_6| \ge 7$, by Theorem 1.2,
\[
m_{T'_6}(1) < \frac{n(T'_6)-2}{4}.
\]
Thus
\[
m_{T}(1) = m_{T^*_5}(1) + m_{T'_6}(1) < \frac{n(T'_6)-2}{4} \le \frac{n-9-2}{4} < \frac{n-8}{4}.
\]

\medskip

\textbf{Case 1.3.} If $T_4$ is not reduced, then $T \cong H^1_{25}$. 
Let $T - e_{4,5} = T'_4 \cup T'_5$, where $T'_4$ is the component containing $v_4$ and $T'_5$ is the component containing $v_5$. 
Applying Lemma 2.7 at $v_5$, we obtain
\[
m_{T}(1) = m_{T^*_4}(1) + m_{T'_5}(1),
\]
where $T^*_4$ is obtained from $T'_4$ by attaching a pendant $P_2$ at $v_4$. 
By Lemmas 2.2 and 2.4, we have $m_{T^*_4}(1) = 0$. 
Note that $|T'_5| \ge 7$ and $T'_5$ is reduced. Hence, by Theorem 1.2,
\[
m_{T'_5}(1) < \frac{n(T'_5)-2}{4}.
\]
Therefore,
\[
m_{T}(1) = m_{T^*_4}(1) + m_{T'_5}(1) < \frac{n(T'_5)-2}{4} \le \frac{n-6-2}{4} = \frac{n-8}{4}.
\]

\begin{figure}[H]
  \centering
  \includegraphics[width=0.8\linewidth]{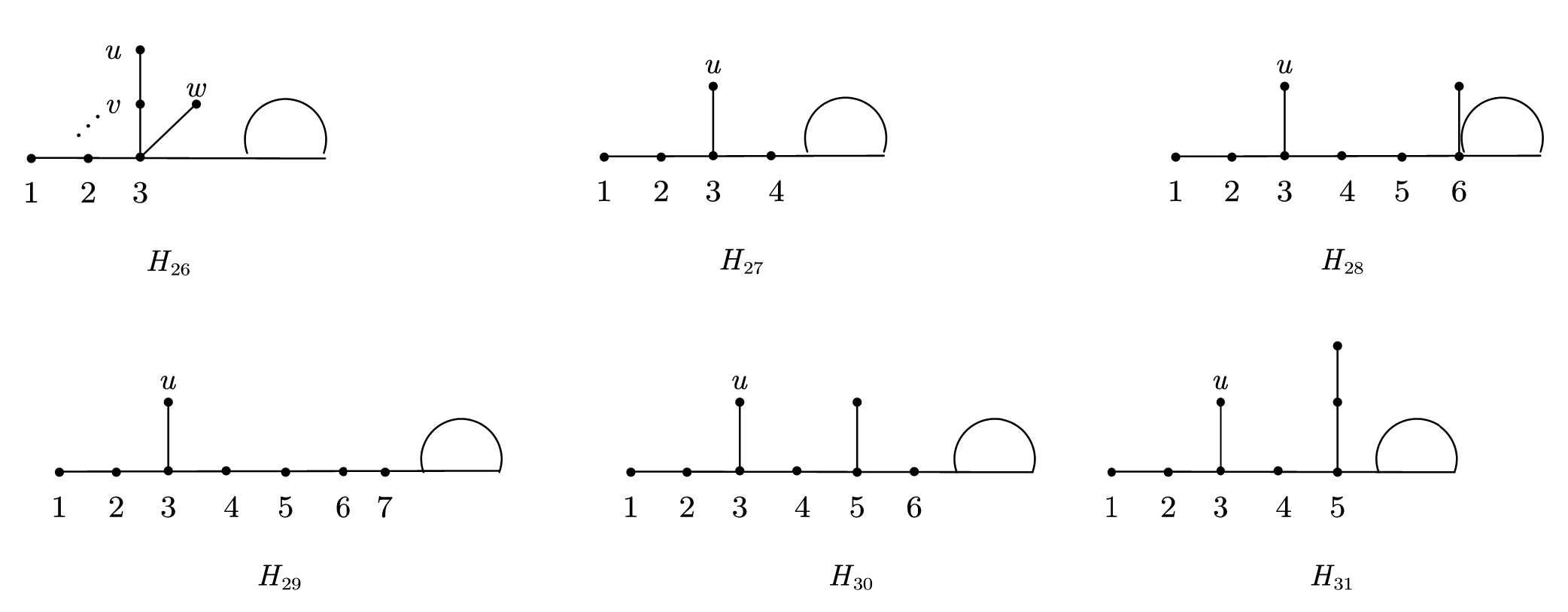}
   \caption{$H_{i}$ for $26\le i\le 31$} 
\end{figure}
\textbf{Case 2.} $v_3$ or $v_{d-1}$ is a quasi-pendant vertex. Without loss of generality, assume $v_3$ is quasi-pendant.

\medskip

\textbf{Case 2.1.} $d_{v_3} \ge 4$.

Then $v_3$ has at least two pendant $P_2$, as shown in $H_{26}$. By Lemma 2.8,
\[
m_{T}(1) = m_{T-\{u,v\}}(1).
\]
Since $T-\{u,v\}$ is reduced and contains no pendant $P_3$, by Theorem 1.1,
\[
m_{T-\{u,v\}}(1) \le \frac{n(T-\{u,v\})-6}{4}.
\]
Hence
\[
m_{T}(1) = m_{T-\{u,v\}}(1) \le \frac{n(T-\{u,v\})-6}{4} = \frac{n-8}{4}.
\]
Equality holds iff $m_{T-\{u,v\}}(1) = \frac{n(T-\{u,v\})-6}{4}$, which means $T-\{u,v\} \cong G_1$, so $T \cong G_2$.

\medskip

\textbf{Case 2.2.} $d_{v_3} = 3$.

Let $T_3 = T - \{v_1, v_2\}$. By Lemma 2.8, $m_{T}(1) = m_{T_3}(1)$. Clearly $T_3$ is reduced.

\textbf{Case 2.2.1.} If $T_3$ contains no pendant $P_3$, then by Theorem 1.1,
\[
m_{T_3}(1) \le \frac{n(T_3)-6}{4},
\]
so
\[
m_{T}(1) = m_{T_3}(1) \le \frac{n(T_3)-6}{4} = \frac{n-8}{4}.
\]
Equality holds iff $m_{T_3}(1) = \frac{n(T_3)-6}{4}$, which means $T_3 \cong G_1$, so $T \cong G_3$.

\textbf{Case 2.2.2.} If $T_3$ contains a pendant $P_3$, then $T \cong H_{27}$. Let $T'_3 = T_3 - \{u, v_3, v_4\}$. Now consider $T'_3$.

\textbf{Case 2.2.2.1.} If $T'_3$ is not reduced, then $T \cong H_{28}$. Let $T_4$ be the component of $T - e_{3,4}$ containing $v_4$. Note $|T_4| \ge 8$ because $n \ge 12$. $T_4$ is reduced, $T_4 \not\cong G_1, G_4, G_5$, and $T_4$ has no pendant $P_3$. By the induction hypothesis,
\[
m_{T_4}(1) \le \frac{n(T_4)-8}{4},
\]
with equality iff $T_4 \cong G_2$ or $G_3$. By Lemma 2.6,
\[
m_{T}(1) = 1 + m_{T_4}(1) \le 1 + \frac{n(T_4)-8}{4} = 1 + \frac{n-4-8}{4} = \frac{n-8}{4},
\]
with equality iff $T_4 \cong G_2$ or $G_3$, i.e., $T \cong G_2$ or $G_3$.

\textbf{Case 2.2.2.2} If $T'_3$ is reduced, we consider whether $T'_3$ contains a pendant $P_3$.
\begin{itemize}
\item If $T'_3$ contains no pendant $P_3$, since $|T'_3| \ge 7$ (because $n \ge 12$), by Theorem 1.1,
\[
m_{T'_3}(1) \le \frac{n(T'_3)-6}{4}.
\]
Thus
\[
m_{T}(1) = m_{T_3}(1) = m_{T'_3}(1) \le \frac{n(T'_3)-6}{4} = \frac{n-5-6}{4} < \frac{n-8}{4}.
\]
\item If $T'_3$ contains a pendant $P_3$, then $T$ is isomorphic to $H_{29}, H_{30}$ or $H_{31}$. In each case, similar to the previous discussions, we obtain $m_{T}(1) < \frac{n-8}{4}$.

\end{itemize}
\end{proof}
\begin{proof}[Proof of Theorem 1.4]
If $T \not\cong G_4$ or $G_5$, then by Theorem 1.3 we have
\[
m_{T}(1) \le \frac{n-8}{4} < \frac{n-7}{4}.
\]
Thus we only need to consider the case $T \cong G_4$ or $G_5$. By using a computer, we easily obtain $m_{L(G_4)}(1) = 0$ and $m_{L(G_5)}(1) = 1$. Hence, when $T \cong G_4$ or $G_5$,
\[
m_{T}(1) = \frac{n-7}{4}.
\]

This completes the proof.
\end{proof}

\section{Acknowledgments}

We gratefully acknowledge the support of the Graduate Innovation Program of China University of Mining and Technology (2025WLKXJ146), the Fundamental Research Funds for the Central Universities, and the Postgraduate Research and Practice Innovation Program of Jiangsu Province (KYCX25\_2858) for this work.

\end{document}